\newtheorem{Thm}{Theorem}[section]
\newtheorem{Prop}{Proposition}[section]
\newtheorem{Lem}{Lemma}[section]
\newtheorem{Rem}{Remark}[section]
\newtheorem{definition}{Definition}[section]
\newtheorem{ex}{Example}[section]
\numberwithin{equation}{section}
\newcommand{\bb}{\mathbb}
\newcommand{\frk}{\mathfrak}
\newcommand{\ptl}{\partial}
\newcommand{\dlt}{\delta}
\newcommand{\lmd}{\lambda}
\newcommand{\mbb}{\mathbb}
\newcommand{\ol}{\overline}
\newcommand{\eq}{\begin{equation}}
\newcommand{\en}{\end{equation}}
\newcommand{\beqna}[1]{\begin{eqnarray}\label{#1}}
\newcommand{\eeqna}{\end{eqnarray}}
\newcommand{\beqn}[1]{\begin{equation}\label{#1}}
\newcommand{\eeqn}{\end{equation}}
\numberwithin{equation}{section}
\begin{document}

\title[Gelfand-Kirillov Dimensions of  the $\bb{Z}$-graded
Oscillator Representations]{ Gelfand-Kirillov Dimensions of  the $\mathbb{Z}$-graded
Oscillator Representations of  $\mathfrak{o}(n,\mathbb{C})$ and $\mathfrak{sp}(2n,\mathbb{C})$}

\author{Zhanqiang Bai}

\address{ School of Mathematics and Statistics , Wuhan University , Wuhan, 430072, P.R. China}
\email{Zhanqiang\.bai@whu.edu.cn}

\thanks{2010 Mathematics Subject Classification. Primary 17B10; Secondary 22E47 }




\maketitle

\begin{abstract}

In this paper, we give a method to compute the Gelfand-Kirillov dimensions of some polynomial type weight modules. These modules are
 infinite-dimensional  irreducible $\mathfrak{o}(n, \bb C)$-modules and $\mathfrak{sp}(2n, \bb C)$-modules that appeared in the
  $\bb{Z}$-graded oscillator generalizations of the classical theorem on harmonic polynomials established by Luo
  and Xu. We also found that some of these modules have the secondly minimal GK-dimension, and some of them have the larger  GK-dimension than the maximal GK-dimension apearing in unitary highest-weight modules.

{\bf Key Words:}   Gelfand-Kirillov dimension; Weight module;  Oscillator representation.

\end{abstract}

\section{Introduction}

Fifty years ago, Gelfand and Kirillov \cite{Ge-Ki} introduced a quantity to
measure the rate of growth of an algebra in terms of any generating
set, which is now known as  Gelfand-Kirillov dimension. Since then the  Gelfand-Kirillov dimension has become a very useful and powerful tool for people to  measure the size of infinite-dimensional irreducible  modules of Lie algebras and Lie groups. However, usually it is not easy  to compute the Gelfand-Kirillov dimensions of explicit modules.

A module of a finite-dimensional simple Lie algebra is called a {\it
weight module} if it is a direct sum of its weight subspaces. The classification of weight modules had been completed by Mathieu \cite{Ma} after the contributions of many mathematicians.   But we don't have many results about the distribution of Gelfand-Kirillov dimensions of weight modules. Let
$M$ be an irreducible highest-weight module for a finite-dimensional
simple Lie algebra $\mathfrak{g}$. Then $M$ is naturally a weight
module with finite-dimensional weight subspaces.
Denote by $d_{M}$
its Gelfand-Kirillov dimension. We fix a Cartan subalgebra
$\mathfrak{h}$, a root system $\Delta\subset \mathfrak{h}^*$  and a
set of positive roots $\Delta_+ \subset \Delta$. Let $\rho$ be half
the sum of all positive roots. Suppose that $\beta$ is the highest
root. It is well known that $d_M=0$ if and only if $M$ is
finite-dimensional, in which case irreducible modules are classified
by the highest-weight theory. From Vogan \cite{Vogan-81} and Wang
\cite{wang-99}, we know that the next smallest integer occurring is
$d_M=(\rho,\beta^{\vee})$. We call them the \emph{minimal
Gelfand-Kirillov dimension module}. These small modules are of great
interest in representation theory. A general introduction can be
found in Vogan \cite{Vogan-81}. Recently we \cite{Bai-Hu} studied the GK-dimensions of unitary highest-weight modules. We found that the secondly minimal GK-dimension of a unitary highest-weight module is $2((\rho,\beta^{\vee})-C)$ and the maximal GK-dimension of a unitary highest-weight module is $r((\rho,\beta^{\vee})-(r-1)C)$, where $C$ and $r$ are constants only depending on the type of Lie algebras and given by Enright, Howe and Wallach in \cite{EHW}. Does any irreducible weight module have larger GK-dimension than $r((\rho,\beta^{\vee})-(r-1)C)$? We will confirm the answer in this paper.

In classical harmonic analysis,  a  fundamental theorem  says that
the spaces of homogeneous harmonic polynomials are irreducible
modules of the corresponding orthogonal Lie group (algebra) and the
whole polynomial algebra is a free module over the invariant
polynomials generated by harmonic polynomials. Bases of these
irreducible modules can be obtained easily (e.g., cf. \cite{Xu-08}).
The algebraic beauty of the above theorem is that the Laplace equation
characterizes the irreducible submodules of the polynomial algebra
and the corresponding quadratic invariant gives a decomposition of
the polynomial algebra into a direct sum of irreducible submodules,
namely, complete reducibility.
Recently Luo and Xu \cite{Luo-Xu} established the $\bb{Z}^2$-graded
oscillator generalizations of the above theorem for
$\mathfrak{sl}(n,\bb C)$, where the irreducible submodules are $\bb
Z^2$-graded homogeneous polynomial solutions of deformed Laplace
equations. In \cite{Bai-sl}, we find an exact formula of
Gelfand-Kirillov dimensions for these $\mathfrak{sl}(n,\bb C)$-modules. It turns out that their
Gelfand-Kirillov dimensions are independent of the double grading
and three infinite subfamilies of these modules have the minimal
Gelfand-Kirillov dimension. In \cite{Luo-Xu-lie,Luo-Xu-algebra}, by using the results in \cite{Luo-Xu}, Luo and Xu established the structure of the corresponding two-parameter $\mathbb{Z}$-graded oscillator representations of $\mathfrak{o}(n,\mathbb{C})$ and $\mathfrak{sp}(2n,\mathbb{C})$. It turned out that these modules are irreducible weight modules. In this paper, we will compute the Gelfand-Kirillov dimensions of these modules. Below we give  a more detailed  introduction for theses modules.

For convenience, we will use the notion
$\overline{{i,i+j}}=\{i,i+1,i+2,...,i+j\}$ for integers $i$ and $j$
with $i\leq j$. Denote by $\bb{N}$ the additive semigroup of
nonnegative integers. Let $E_{r,s}$ be the square matrix with $1$ as
its $(r,s)$-entry and $0$ as the others.
The orthogonal Lie algebra
$$\mathfrak{o}(2n,\mathbb{C})=\sum_{i,j=1}^n\mathbb{C}(E_{i,j}-E_{n+j,n+i})+\sum_{1\leq
i<j\leq
n}[\mathbb{C}(E_{i,n+j}-E_{j,n+i})+\mathbb{C}(E_{n+j,i}-E_{n+i,j})]$$
Denote
${\mathcal{B}}=\bb{C}[x_1,...,x_n,y_1,...,y_n]$. Fix
 $n_1,n_2\in\overline{1,n}$ with $n_1\leq n_2$. We have the following non-canonical oscillator representation of $\mathfrak{o}(2n,\bb{C})$ on
  ${\mathcal{B}}$ determined by
\eq\label{xy} (E_{i,j}-E_{n+j,n+i})|_{\mathcal{B}}=E_{i,j}^x-E_{j,i}^y\qquad \text{for}~ i,j\in\overline{1,n}\en with
\eq E_{i,j}^x|_{\mathcal{B}}=\left\{\begin{array}{ll}-x_j\partial_{x_i}-\delta_{i,j}&\mbox{if}\;
i,j\in\overline{1,n_1},\\
\partial_{x_i}\partial_{x_j}&\mbox{if}\;i\in\overline{1,n_1},\;j\in\overline{n_1+1,n},\\
-x_ix_j &\mbox{if}\;i\in\overline{n_1+1,n},\;j\in\overline{1,n_1},\\
x_i\partial_{x_j}&\mbox{if}\;i,j\in\overline{n_1+1,n}
\end{array}\right.\en
and
\eq E_{i,j}^y|_{\mathcal{B}}=\left\{\begin{array}{ll}y_i\partial_{y_j}&\mbox{if}\;
i,j\in\overline{1,n_2},\\
-y_iy_j&\mbox{if}\;i\in\overline{1,n_2},\;j\in\overline{n_2+1,n},\\
\partial_{y_i}\partial_{y_j} &\mbox{if}\;i\in\overline{n_2+1,n},\;j\in\overline{1,n_2},\\
-y_j\partial_{y_i}-\delta_{i,j}&\mbox{if}\;i,j\in\overline{n_2+1,n}
\end{array}\right.\en
and
\eq E_{i,n+j}|_{\mathcal{B}}=\left\{\begin{array}{ll}
\ptl_{x_i}\partial_{y_j}&\mbox{if}\;
i\in\overline{1,n_1}, \,j\in\overline{1,n_2},\\
-y_j\ptl_{x_i}&\mbox{if}\;i\in\overline{1,n_1},\, j\in\overline{n_2+1,n},\\
{x_i}\partial_{y_j} &\mbox{if}\;i\in\overline{n_1+1,n}, \,j\in\overline{1,n_2},\\
-x_iy_j &\mbox{if}\; i\in \overline{n_1+1,n}, \,    j\in\overline{n_2+1,n}
\end{array}\right.\en

and
\eq\label{xy-} E_{n+i,j}|_{\mathcal{B}}=\left\{\begin{array}{ll}
-{x_j}{y_i}&\mbox{if}\;
j\in\overline{1,n_1}, \,i\in\overline{1,n_2},\\
-x_j\ptl_{y_i}&\mbox{if}\;j\in\overline{1,n_1}, \,i\in\overline{n_2+1,n},\\
{y_i}\partial_{x_j} &\mbox{if}\;j\in\overline{n_1+1,n}, \,i\in\overline{1,n_2},\\
\ptl_{x_j}\ptl_{y_i} &\mbox{if}\; j\in \overline{n_1+1,n}, \,     i\in\overline{n_2+1,n}.
\end{array}\right.\en

The related variated  Laplace operator becomes
\eq {\mathcal{D}}=\sum_{i=1}^{n_1}x_i\partial_{y_i}-\sum_{r=n_1+1}^{n_2}\partial_{x_r}\partial_{y_r}+\sum_{s=n_2+1}^n
y_s\partial_{x_s}.\en


 Set
\eq{\mathcal{B}}_{\langle
k'\rangle}=\mbox{Span}\{x^\alpha
y^\beta\mid\alpha,\beta\in\bb{N}\:^n,\sum_{r=n_1+1}^n\alpha_r-\sum_{i=1}^{n_1}\alpha_i+
\sum_{i=1}^{n_2}\beta_i-\sum_{r=n_2+1}^n\beta_r=k'\}\en for
$k'\in\bb{Z}$. Define
\eq{\mathcal{H}}_{\langle k'\rangle}=\{f\in
{\mathcal{B}}_{\langle k'\rangle}\mid
{\mathcal{D}}(f)=0\}.\en

The following is the first main theorem of this paper.

\begin{Thm}\label{main} For any $k'\in\bb{Z}$, if
the $\mathfrak{o}(2n,\bb{C})$-module  ${\mathcal{H}}_{\langle
k' \rangle}$ is irreducible, then it has the
Gelfand-Kirillov dimension
\eq \label{formula}
d=\left\{
    \begin{array}{ll}
      2n-1, &  \hbox{\text{\emph{if~}} $1=n_1< n_2<n-1$, \emph{or~} $3\leq n_1<n_2=n$,} \\
           & \hbox{\emph{or~}$1<n_1< n_2\leq n-1$,  \emph{or~}$n_1=n_2$ \emph{when~} $n\geq 5$;}\\
      2n-2, & \hbox{\emph{if~} $1=n_1< n_2=n-1,n$, \emph{or~} $2=n_1<n_2=n$} \\
            & \hbox{\emph{or~} $n_1=n_2$   \emph{when~} $n=4$;}\\
       2n-3, & \hbox{\emph{if~} $n_1=n_2$ \emph{when~} $n=2,3$.}
        \end{array}
  \right. \en
\end{Thm}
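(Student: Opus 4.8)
The plan is to compute $d=\dim \mathrm{GKdim}\,\mathcal{H}_{\langle k'\rangle}$ by realizing $\mathcal{H}_{\langle k'\rangle}$ concretely inside $\mathcal{B}$ and estimating the dimensions of its homogeneous filtration pieces. Since $\mathcal{H}_{\langle k'\rangle}$ is a weight module with finite-dimensional weight spaces, its GK-dimension equals the degree of polynomial growth of $\dim \mathcal{H}_{\langle k'\rangle}\cap \mathcal{B}_{\le m}$, where $\mathcal{B}_{\le m}$ is the span of monomials $x^\alpha y^\beta$ of total degree $|\alpha|+|\beta|\le m$ — this is because the universal enveloping algebra action moves degree by a bounded amount (each generator in \eqref{xy}--\eqref{xy-} changes $|\alpha|+|\beta|$ by $0$ or $\pm 2$), so the standard filtration and the degree filtration are cofinal. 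Thus I would first reduce the problem to a counting problem: estimate the number of monomials $x^\alpha y^\beta$ with $|\alpha|+|\beta|\le m$ satisfying the linear constraint $\sum_{r>n_1}\alpha_r-\sum_{i\le n_1}\alpha_i+\sum_{i\le n_2}\beta_i-\sum_{r>n_2}\beta_r=k'$ that additionally lie in $\ker\mathcal{D}$.

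Next I would handle the kernel condition $\mathcal{D}(f)=0$. The operator $\mathcal{D}=\sum_{i=1}^{n_1}x_i\partial_{y_i}-\sum_{r=n_1+1}^{n_2}\partial_{x_r}\partial_{y_r}+\sum_{s=n_2+1}^{n}y_s\partial_{x_s}$ is homogeneous of degree $0$ in total degree, so it preserves each $\mathcal{B}_{\langle k'\rangle}\cap\mathcal{B}_{\le m}$; hence $\dim(\mathcal{H}_{\langle k'\rangle}\cap\mathcal{B}_{\le m})=\dim(\mathcal{B}_{\langle k'\rangle}\cap\mathcal{B}_{\le m})-\mathrm{rank}(\mathcal{D}|_{\mathcal{B}_{\langle k'\rangle}\cap\mathcal{B}_{\le m}})$. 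The key point is that generically $\mathcal{D}$ is as surjective as it can be onto the appropriate codomain (one should exhibit, using the solution bases from \cite{Luo-Xu-lie,Luo-Xu-algebra}, an explicit right inverse or an explicit complement), so the rank equals $\dim$ of the image, which is itself a space of the same ``shape'' with one fewer free parameter. The upshot is that $\dim(\mathcal{H}_{\langle k'\rangle}\cap\mathcal{B}_{\le m})$ grows like $m^{d}$ where $d$ is one less than the naive dimension count of $\mathcal{B}_{\langle k'\rangle}\cap\mathcal{B}_{\le m}$ — and the degenerate cases in the theorem are exactly those where the constraint $\mathcal{D}f=0$ together with the grading constraint forces extra degeneracies (for instance when $n_1=n_2$ the middle sum in $\mathcal{D}$ is empty and $\mathcal{D}$ becomes a first-order operator, cutting growth by more).

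I would then do the bookkeeping case by case, organizing by which of the index blocks $\overline{1,n_1}$, $\overline{n_1+1,n_2}$, $\overline{n_2+1,n}$ are nonempty and how large they are. In the ``generic'' regime the free directions number $2n$ (the $2n$ exponents) minus one for the linear grading constraint, minus one for the $\mathcal{D}=0$ constraint, giving $2n-2$ naive dimensions; but a more careful analysis of leading-order growth — noting that the grading constraint is an equality not a bound, so it costs only a constant factor rather than a dimension when the coefficients have mixed signs, while it does cost a dimension when all relevant coefficients have the same sign — upgrades this to $2n-1$ in most configurations and leaves $2n-2$ or $2n-3$ in the listed exceptional ones. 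Concretely I would: (i) fix $m$ and count $N_m:=\dim(\mathcal{B}_{\langle k'\rangle}\cap\mathcal{B}_{\le m})$ as a quasi-polynomial in $m$ by a lattice-point argument (Ehrhart-type), reading off its degree from the combinatorics of signs; (ii) show $\mathcal{D}$ drops the degree of growth by exactly one in the non-$n_1=n_2$ cases and by the amount recorded when $n_1=n_2$ and $n$ is small; (iii) match the resulting degrees against \eqref{formula}. I expect the main obstacle to be step (ii) in the boundary cases listed in the second and third branches of \eqref{formula}: there the interplay between the grading hyperplane and $\ker\mathcal{D}$ is delicate, and one must verify that $\mathcal{D}$ restricted to the relevant subspace really does have the maximal possible rank (equivalently, that no unexpected cancellation inflates $\dim\mathcal{H}_{\langle k'\rangle}$), which I would establish by producing explicit preimages or by exhibiting a transverse monomial basis of a complement to $\ker\mathcal{D}$, using the structure theory of \cite{Luo-Xu,Luo-Xu-lie,Luo-Xu-algebra} for the shape of the solution spaces.
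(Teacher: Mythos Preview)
Your proposal has a genuine gap in the dimension counting, and it does not match the paper's method.

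The core problem is the subtraction step. You propose
\[
\dim\bigl(\mathcal{H}_{\langle k'\rangle}\cap\mathcal{B}_{\le m}\bigr)
=\dim\bigl(\mathcal{B}_{\langle k'\rangle}\cap\mathcal{B}_{\le m}\bigr)
-\mathrm{rank}\bigl(\mathcal{D}|_{\mathcal{B}_{\langle k'\rangle}\cap\mathcal{B}_{\le m}}\bigr),
\]
and then claim ``$\mathcal{D}$ drops the degree of growth by exactly one.'' But both terms on the right are of the \emph{same} order $m^{2n-1}$: the lattice points $(\alpha,\beta)\in\mathbb{N}^{2n}$ with $\sum(\alpha_i+\beta_i)\le m$ and one linear equality constitute a $(2n-1)$-dimensional polytope slice, and $\mathcal{D}$ maps $\mathcal{B}_{\langle k'\rangle}$ into $\mathcal{B}_{\langle k'-2\rangle}$, whose truncation has the same leading growth. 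So the kernel is a difference of two quantities with equal leading exponent, and its growth degree cannot be read off without comparing leading \emph{coefficients}; that comparison is exactly the hard combinatorics you are trying to bypass. Your attempted fix---that a mixed-sign equality ``costs only a constant factor rather than a dimension''---is simply false: any non-degenerate linear equality cuts $\sim m^{2n}$ lattice points to $\sim m^{2n-1}$, regardless of the signs of its coefficients. (Incidentally, $\mathcal{D}$ is not homogeneous of total degree $0$ when $n_1<n_2$, since $-\sum_{n_1<r\le n_2}\partial_{x_r}\partial_{y_r}$ lowers degree by $2$.) Finally, even the reduction to the degree filtration is incomplete: from ``each generator changes $|\alpha|+|\beta|$ by at most $2$'' you only get $U_k(\mathfrak{g})M_0\subseteq\mathcal{B}_{\le 2k+C}$, hence only an \emph{upper} bound on GK-dimension from degree growth; the reverse inclusion $\mathcal{H}_{\langle k'\rangle}\cap\mathcal{B}_{\le m}\subseteq U_{cm}(\mathfrak{g})M_0$ is not established by cofinality alone.

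The paper proceeds quite differently and never subtracts. It fixes a $\mathcal{K}$-singular vector $v_{\mathcal{K}}\in\mathcal{H}_{\langle k'\rangle}$, uses PBW to factor $U(\mathfrak{o}(2n,\mathbb{C})_-+\mathcal{P}_+)=U(\mathfrak{g}_2)U(\mathfrak{g}_1)$, where $\mathfrak{g}_1$ is spanned by the root vectors acting locally nilpotently on $\mathcal{B}$ and $\mathfrak{g}_2$ by those acting injectively (each as multiplication by an explicit quadratic such as $-x_ix_s$, $-x_ix_t+y_iy_t$, $-x_iy_r+x_ry_i$, $-x_sy_t$, $-x_py_t+x_ty_p$). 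Then $M_0=U(\mathfrak{g}_1)v_{\mathcal{K}}$ is finite-dimensional, $\mathcal{H}_{\langle k'\rangle}=U(\mathfrak{g}_2)M_0$, and the GK-dimension is the growth degree of $\dim U_k(\mathfrak{g}_2)M_0$. This reduces to estimating the span of $k$-fold products of a fixed finite list of quadratics in $\mathcal{B}$. The case split in the theorem---in particular the drop to $2n-2$ or $2n-3$ when $n_1=n_2$ and $n\le 4$---comes from direct upper and lower bounds on those spans (upper bounds by crude monomial counts, lower bounds by extracting leading monomials in a lexicographic order), not from any rank computation for $\mathcal{D}$.
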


\begin{Rem}For this case, the minimal GK-dimension is $2n-3$. From our paper \cite{Bai-Hu}, the secondly minimal GK-dimension is max($4n-10, 2n-2$),
and the maximal GK-dimension is $\frac{n(n-1)}{2}$.  So $2n-1$ is larger than the secondly minimal GK-dimension of any unitary highest-weight modules when $n\leq 4$ and smaller then the secondly minimal GK-dimension of any unitary highest-weight modules when $n>4$.
\end{Rem}

We observe that the orthogonal Lie algebra
$$\mathfrak{o}(2n+1,\mbb{C})=\mathfrak{o}(2n,\mbb{C})\oplus\bigoplus_{i=1}^n
[\mbb{C}(E_{0,i}-E_{n+i,0})+\mbb{C}(E_{0,n+i}-E_{i,0})].$$
Let ${\mathcal B}'=\mbb{C}[x_0,x_1,...,x_n,y_1,...,y_n]$. We define a
non-canonical oscillator representation of $\mathfrak{o}(2n+1,\mbb{C})$ on
${\mathcal B}'$ by the differential operators in (\ref{xy})-(\ref{xy-}) and
$$ E_{0,i}|_{{\mathcal
B}'}=\left\{\begin{array}{ll}-x_0x_i&\mbox{if}\;i\in\ol{1,n_1},\\
x_0\ptl_{x_i}&\mbox{if}\;i\in\ol{n_1+1,n},\\
x_0\ptl_{y_{i-n}}&\mbox{if}\;i\in\ol{n+1,n+n_2},\\
-x_0y_{i-n}&\mbox{if}\;i\in\ol{n+n_2+1,2n}\end{array}\right.$$
and
$$ E_{i,0}|_{{\mathcal
B}'}=\left\{\begin{array}{ll}\ptl_{x_0}\ptl_{x_i}&\mbox{if}\;i\in\ol{1,n_1},\\
x_i\ptl_{x_0}&\mbox{if}\;i\in\ol{n_1+1,n},\\
y_{i-n}\ptl_{x_0}&\mbox{if}\;i\in\ol{n+1,n+n_2},\\
\ptl_{x_0}\ptl_{y_{i-n}}&\mbox{if}\;i\in\ol{n+n_2+1,2n}.\end{array}\right.$$
Now the variated Laplace operator becomes
$${\mathcal
D}'=\ptl_{x_0}^2-2\sum_{i=1}^{n_1}x_i\ptl_{y_i}+2\sum_{r=n_1+1}^{n_2}\ptl_{x_r}\ptl_{y_r}-2\sum_{s=n_2+1}^n
y_s\ptl_{x_s}.$$


Set
$${\mathcal B}'_{\langle k\rangle}=\sum_{i=0}^\infty {\mathcal B}_{\langle
k-i\rangle}x_0^i,\qquad {\mathcal H}'_{\langle k\rangle}=\{f\in {\mathcal B}'_{\langle
k\rangle}\mid {\mathcal D}'(f)=0\}.$$

The following is the second main theorem of this paper.

\begin{Thm}\label{main} For any $k'\in\bb{Z}$, the irreducible $\mathfrak{o}(2n+1,\bb{C})$-module  ${\mathcal{H}'}_{\langle
k' \rangle}$  has the
Gelfand-Kirillov dimension
\eq \label{formula}
d=\left\{
    \begin{array}{ll}
      2n, &  \hbox{\text{\emph{if~}} $1\leq n_1< n_2<n-1$, \emph{or~} $3\leq n_1<n_2=n$,} \\
           & \hbox{\emph{or~}$1<n_1< n_2= n-1$,  \emph{or~}$n_1=n_2$ \emph{when~} $n\geq 5$,}\\
          & \hbox{\emph{or~}$n_1=n_2=n=3$,  \emph{or~}$n_1=n_2>1$ \emph{when~} $n=4$;}\\
      2n-1, & \hbox{\emph{if~} $1=n_1< n_2=n-1,n$, \emph{or~} $2=n_1<n_2=n$}, \\
            & \hbox{\emph{or~} $n_1=n_2=2$   \emph{when~} $n=2,3$, \emph{or~} $n_1=n_2=1$   \emph{when~} $n=1,4$;}\\
       2n-2, & \hbox{\emph{if~} $1=n_1=n_2<n=2,3$.}
        \end{array}
  \right. \en
\end{Thm}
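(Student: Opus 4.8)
\emph{Proof strategy.} The plan is to read the Gelfand--Kirillov dimension off the explicit polynomial model, reducing the $\frk o(2n+1,\bb C)$ case to the $\frk o(2n,\bb C)$ case already treated above.

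First record the operator identity $\mathcal{D}'=\ptl_{x_0}^2-2\mathcal{D}$, with $\mathcal{D}=\sum_{i=1}^{n_1}x_i\ptl_{y_i}-\sum_{r=n_1+1}^{n_2}\ptl_{x_r}\ptl_{y_r}+\sum_{s=n_2+1}^{n}y_s\ptl_{x_s}$ the Laplace operator of the $\frk o(2n,\bb C)$ theorem. Since $\mathcal{D}$ is a sum of commuting, locally nilpotent operators, it is locally nilpotent on $\mathcal{B}$. Writing $f=\sum_{i\ge0}f_ix_0^i$ with $f_i\in\mathcal{B}_{\langle k'-i\rangle}$, the equation $\mathcal{D}'f=0$ is equivalent to the recursion $(i+1)(i+2)f_{i+2}=2\mathcal{D}(f_i)$, which terminates by local nilpotency; hence $f\mapsto(f_0,f_1)$ gives a linear isomorphism
$$\mathcal{H}'_{\langle k'\rangle}\ \xrightarrow{\ \sim\ }\ \mathcal{B}_{\langle k'\rangle}\oplus\mathcal{B}_{\langle k'-1\rangle}.$$
Both multiplication by $x_0$ and the operator $\mathcal{D}$ commute with the Cartan subalgebra $\frk h$ of $\frk o(2n+1,\bb C)$ --- the $\frk h$-weight of $x_0^{a_0}x^\alpha y^\beta$ does not depend on $a_0$, and $\mathcal{D}$ commutes with the $\frk o(2n,\bb C)$-action on $\mathcal{B}$ --- so this isomorphism matches weight spaces. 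Hence the $\frk h$-character of $\mathcal{H}'_{\langle k'\rangle}$ is the sum of those of $\mathcal{B}_{\langle k'\rangle}$ and $\mathcal{B}_{\langle k'-1\rangle}$, each being an explicit count of monomials $x^\alpha y^\beta$, $(\alpha,\beta)\in\bb N^{2n}$, with a prescribed value of $\mathrm{grade}(\alpha,\beta)$.

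Since $\mathcal{H}'_{\langle k'\rangle}$ is irreducible \cite{Luo-Xu-lie,Luo-Xu-algebra}, its Gelfand--Kirillov dimension is the growth degree of $\dim\bigl(U_m(\frk o(2n+1,\bb C))\cdot v\bigr)$ for any nonzero $v$; taking $v$ a monomial and using that $\frk o(2n+1,\bb C)$ shifts $\frk h$-weights boundedly, this equals the growth degree of $\dim\mathcal{H}'_{\langle k'\rangle,[m]}$, the span of the weight spaces at root-height $\le m$ below the extreme weight --- a good filtration. I would evaluate this from the character above, using that the weight of $x^\alpha y^\beta$ has $i$-th component $-(\alpha_i+\beta_i+1)$ for $i\le n_1$, $\alpha_i-\beta_i$ for $n_1<i\le n_2$, and $\alpha_i+\beta_i+1$ for $i>n_2$: ``height below the extreme weight'' becomes an explicit nonnegative linear functional of $(\alpha,\beta)$, and $\dim\mathcal{H}'_{\langle k'\rangle,[m]}$ is the number of lattice points of $\bb N^{2n}$ in the region where that functional is $\le m$ and $\mathrm{grade}(\alpha,\beta)$ equals $k'$ or $k'-1$. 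Its growth degree is the dimension of this region, adjusted for the directions $\alpha_i=\beta_i$ ($n_1<i\le n_2$) along which the weight stays constant while the grade does not.

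It then remains to compute this dimension. In the generic ranges of $(n_1,n_2,n)$ the region is non-degenerate and one gets $d=2n$; the value drops to $2n-1$ or $2n-2$ exactly when, for small $n$, a facet of the region becomes parallel to the grading hyperplane, that is, in the finitely many low-rank configurations in the statement (notably $n_1=n_2$ with $n\le4$, and $1=n_1<n_2=n-1,n$, and $2=n_1<n_2=n$); for these one computes the Hilbert polynomial by hand, feeding in the $\frk o(2n,\bb C)$ result for the $x^\alpha y^\beta$-part. I expect the main obstacle to be precisely this low-rank bookkeeping --- confirming irreducibility is inherited in each sub-case and dispatching the finite list of exceptional counts.
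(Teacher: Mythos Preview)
Your approach is genuinely different from the paper's. The paper does not use the character: it repeats the machinery of Section~3, taking a $\mathcal K$-singular generator (one of $x_{n_1}^{k'}$, $x_{n_1+1}^{k'}$, $T_1(y_{n_1}^{k'-1})$ according to the sign of $k'$ and whether $n_1<n_2$ or $n_1=n_2$), splitting the root vectors of $\mathfrak o(2n+1,\bb C)_-+\mathcal P_+$ into a locally nilpotent piece $\frk g_1$ and an injective piece $\frk g_2$, and reading off the growth of $U_k(\frk g_2)M_0$ by the leading-monomial bookkeeping of Propositions~3.1--3.3. The only new feature is that the short-root operators $(E_{0,i}-E_{n+i,0})|_{\mathcal B'}=-x_0x_i-y_i\ptl_{x_0}$ and $(E_{0,n+t}-E_{t,0})|_{\mathcal B'}=-x_0y_t-x_t\ptl_{x_0}$ join $\frk g_2$; their leading terms contribute an independent $x_0$-direction to the monomial count, which is exactly the uniform $+1$ shift of the $\mathfrak o(2n+1,\bb C)$ table relative to the $\mathfrak o(2n,\bb C)$ one.

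Your isomorphism $\mathcal H'_{\langle k'\rangle}\xrightarrow{\sim}\mathcal B_{\langle k'\rangle}\oplus\mathcal B_{\langle k'-1\rangle}$ via $f\mapsto(f_0,f_1)$ is correct and $\frk h$-equivariant. The gap is the next step: your bounded-shift argument only gives $U_m(\frk g)v\subseteq\bigoplus_{|\mu-\mu_0|\le Cm}M_\mu$, hence $\mathrm{GKdim}\le(\text{weight-growth exponent})$; you have not shown that weight vectors at height $\le m$ actually lie in $U_{Cm}(\frk g)v$. These modules are not of highest-weight type unless $n_2=n$, so there is no ``extreme weight'' and the usual highest-weight arguments are unavailable.

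More concretely, the lattice count you describe cannot reproduce the generic value $d=2n$. The set $\{(\alpha,\beta)\in\bb N^{2n}:\mathrm{grade}(\alpha,\beta)\in\{k',k'-1\},\ \text{height}\le m\}$ lives on two parallel affine hyperplanes in $\bb R^{2n}$ and is bounded (the grade constraint together with the outer weight bounds forces $\sum_{n_1<s\le n_2}(\alpha_s+\beta_s)\lesssim m$ as well), so its cardinality grows like $m^{2n-1}$, not $m^{2n}$. The missing degree of freedom is the $x_0$-degree: it is invisible to the $\frk h$-weight and hence to your character, but the paper's filtration sees it through the injective operators $-x_0x_i-y_i\ptl_{x_0}$ and $-x_0y_t-x_t\ptl_{x_0}$. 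To recover the claimed $2n$ you would need a filtration that also tracks the $x_0$-degree (equivalently, the $\mathcal D$-nilpotency order of $(f_0,f_1)$), at which point you are essentially back to the paper's direct monomial computation.
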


\begin{Rem}For this case, the minimal GK-dimension is $2n-2$. From our paper \cite{Bai-Hu}, the secondly minimal GK-dimension and the maximal GK-dimension are the same, i.e., $2n-1$.  So $2n$ is larger than the maximal GK-dimension of any unitary highest-weight modules.
\end{Rem}

The symplectic Lie
algebra\begin{eqnarray*}\hspace{1cm}\mathfrak{sp}(2n,\mbb{C})&=&
\sum_{i,j=1}^n\mbb{C}(E_{i,j}-E_{n+j,n+i})+\sum_{i=1}^n(\mbb{C}E_{i,n+i}+\mbb{C}E_{n+i,i})\\
& &+\sum_{1\leq i<j\leq n
}[\mbb{C}(E_{i,n+j}+E_{j,n+i})+\mbb{C}(E_{n+i,j}+E_{n+j,i})].\end{eqnarray*}
We define the two-parameter $\mathbb{Z}$-graded oscillator representation of $\mathfrak{sp}(2n,\mbb{C})$ on
${\mathcal B}$ via (\ref{xy})-(\ref{xy-}).

The related variated  Laplace operator becomes
\eq {{D}}=\sum_{r=n_1+1}^{n}x_r\partial_{x_r}-\sum_{i=1}^{n_1}x_i\partial_{x_i}+\sum_{i=1}^{n_2}y_i\partial_{y_i}
-\sum_{r=n_2+1}^{n}y_r\partial_{y_r}.\en
 Set
\eq{\mathcal{B}}_{\langle
k'\rangle}=\mbox{Span}\{x^\alpha
y^\beta\mid\alpha,\beta\in\bb{N}\:^n,\sum_{r=n_1+1}^n\alpha_r-\sum_{i=1}^{n_1}\alpha_i+
\sum_{i=1}^{n_2}\beta_i-\sum_{r=n_2+1}^n\beta_r=k'\}\en for
$k' \in\mathbb{Z}$. Then ${\mathcal{B}}_{\langle k'\rangle}=\{f\in{\mathcal{B}}\mid{D}(f)=k'f\}$.

 The following is the third main theorem of this paper.

\begin{Thm}\label{main} For any $k'\in\bb{Z}$, if
the $\mathfrak{sp}(2n,\bb{C})$-module  ${\mathcal{B}}_{\langle
k' \rangle}$ is irreducible, then it has the
Gelfand-Kirillov dimension
\eq
d=2n-1. \en
When $n_1=n_2$, the $\mathfrak{sp}(2n,\mbb{C})$-module ${\mathcal B}_{\langle 0\rangle}$ also has the
Gelfand-Kirillov dimension $d=2n-1.$ When $n_1=n_2=n$, the two irreducible components of ${\mathcal B}_{\langle 0\rangle}$ also have the
Gelfand-Kirillov dimension $d=2n-1.$
\end{Thm}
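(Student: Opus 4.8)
The plan is to reduce the computation of $d$ to an elementary count of monomials together with one determinantal–variety estimate, following the strategy used for the $\mathfrak{sl}(n,\mathbb{C})$ oscillator modules in \cite{Bai-sl}. Write $\mathcal{B}=\bigoplus_{j\ge 0}\mathcal{B}^{[j]}$ for the grading by total degree, so that $\mathcal{B}_{\langle k'\rangle}=\bigoplus_{j\ge 0}\mathcal{B}_{\langle k'\rangle}^{[j]}$ (the operator $D$ has total degree $0$). Every operator in (\ref{xy})--(\ref{xy-}) is a differential operator on $\mathcal{B}$ of total degree $\le 2$, so for a finitely generated $U(\mathfrak{sp}(2n,\mathbb{C}))$-submodule $M\subseteq\mathcal{B}_{\langle k'\rangle}$ generated by a finite-dimensional $W\subseteq\bigoplus_{j\le d_0}\mathcal{B}^{[j]}$ one has $U_p\cdot W\subseteq\bigoplus_{j\le d_0+2p}\mathcal{B}^{[j]}$; conversely the $U_p$-filtration is cofinal with the total-degree filtration. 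When $n_1=n_2$ this last point is transparent, since then the representation (\ref{xy})--(\ref{xy-}) is $3$-graded, $\mathfrak{sp}(2n,\mathbb{C})=\mathfrak{g}_{-1}\oplus\mathfrak{g}_0\oplus\mathfrak{g}_1$ with $\mathfrak{g}_i$ acting by operators of total degree $2i$; one then has $M=U(\mathfrak{g}_1)W'$ for a suitable $(\mathfrak{g}_0\oplus\mathfrak{g}_{-1})$-stable $W'$, and each graded piece $M\cap\mathcal{B}^{[j]}$ is reached after $O(j)$ steps. In general it is the argument of \cite{Bai-sl} that applies. By \cite{Luo-Xu-lie,Luo-Xu-algebra} the modules in the statement are finitely generated, so in all cases $d(M)$ equals the degree of the Hilbert polynomial $p\mapsto\dim\bigl(M\cap\bigoplus_{j\le p}\mathcal{B}^{[j]}\bigr)$.

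Granting this, the first assertion is a lattice-point count. By definition $\dim\mathcal{B}_{\langle k'\rangle}^{[m]}$ is the number of $(\alpha,\beta)\in\mathbb{N}^{2n}$ with $|\alpha|+|\beta|=m$ and $L(\alpha,\beta)=k'$, where $L(\alpha,\beta)=\sum_{r=n_1+1}^{n}\alpha_r-\sum_{i=1}^{n_1}\alpha_i+\sum_{i=1}^{n_2}\beta_i-\sum_{r=n_2+1}^{n}\beta_r$. All $2n$ coefficients of $L$ are $\pm 1$, and since $1\le n_1\le n_2$ the coefficient of $\alpha_1$ is $-1$ and that of $\beta_1$ is $+1$; hence $L$ is not proportional to the total-degree form, the two conditions are linearly independent, and the hyperplane $\{L=k'\}$ meets the interior of the positive orthant. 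Therefore $\dim\mathcal{B}_{\langle k'\rangle}^{[m]}\asymp m^{2n-2}$ and $\dim\bigl(\bigoplus_{j\le m}\mathcal{B}_{\langle k'\rangle}^{[j]}\bigr)\asymp m^{2n-1}$ (equivalently, the corresponding Hilbert series has a pole of order exactly $2n-1$ at $t=1$), so $d=2n-1$ whenever $\mathcal{B}_{\langle k'\rangle}$ is irreducible; and the same count with $k'=0$ gives $d=2n-1$ for the possibly reducible module $\mathcal{B}_{\langle 0\rangle}$ when $n_1=n_2$.

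It remains to handle the case $n_1=n_2=n$, where by \cite{Luo-Xu-lie,Luo-Xu-algebra} $\mathcal{B}_{\langle 0\rangle}=M_0\oplus M_1$ with $M_0=U(\mathfrak{sp}(2n,\mathbb{C}))\cdot 1$ and $M_1$ generated by a homogeneous subspace $W_1\subseteq\mathcal{B}^{[2]}$ (for $n=2$, $W_1=\mathbb{C}(x_1y_2-x_2y_1)$). Both are submodules of $\mathcal{B}_{\langle 0\rangle}$, so $d(M_0),d(M_1)\le 2n-1$ by the previous step. For the matching lower bounds, observe that when $n_1=n_2=n$ the degree-$(+2)$ part $\mathfrak{g}_1$ of the representation is the span $\mathcal{Q}$ of the commuting multiplication operators $x_iy_j+x_jy_i$ $(1\le i\le j\le n)$, while $1$ and $W_1$ lie in the bottom total degree of $M_0$ and $M_1$ respectively and are therefore annihilated by $\mathfrak{g}_{-1}$; consequently $M_0\supseteq\mathbb{C}[\mathcal{Q}]\cdot 1$ and $M_1\supseteq\mathbb{C}[\mathcal{Q}]\cdot w$ for any $0\ne w\in W_1$. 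Now $\mathbb{C}[\mathcal{Q}]$ is the homogeneous coordinate ring of the image of $\mathbb{C}^{2n}$ under $(x,y)\mapsto xy^{\mathrm{T}}+yx^{\mathrm{T}}$, i.e.\ of the variety of symmetric $n\times n$ matrices of rank $\le 2$, which has dimension $2n-1$; hence $\dim\mathbb{C}[\mathcal{Q}]_p\asymp p^{2n-2}$, multiplication by any nonzero polynomial is injective on it, and therefore $\dim\bigl(M_\ell\cap\bigoplus_{j\le m}\mathcal{B}^{[j]}\bigr)\ge\sum_{2+2p\le m}\dim\mathbb{C}[\mathcal{Q}]_p\asymp m^{2n-1}$ for $\ell=0,1$. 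Thus $d(M_0)=d(M_1)=2n-1$, which completes the proof.

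The step I expect to be the real obstacle is this last lower bound for the smaller component $M_1$: knowing only that $M_1$ is a nonzero submodule of $\mathcal{B}_{\langle 0\rangle}$ does not rule out a drop in the order of growth, so one genuinely needs either the determinantal description of $\mathbb{C}[\mathcal{Q}]$ above or an explicit hold on the Luo--Xu basis of $M_1$. This is precisely the phenomenon responsible, in the orthogonal theorems above, for the non-uniform values $2n-1$, $2n-2$, $2n-3$; the content of the present theorem is that, because $\mathcal{B}_{\langle k'\rangle}$ carries no further Laplace-type constraint, no such degeneration occurs for $\mathfrak{sp}(2n,\mathbb{C})$.
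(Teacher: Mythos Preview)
Your approach is genuinely different from the paper's and more conceptual. The paper proceeds exactly as in Section~3: it splits $\mathfrak{g}(\mathcal{P}_+)_-$ into a locally nilpotent part $\mathfrak{g}_1$ and a part $\mathfrak{g}_2$ (the operators (5.1)--(5.9)) whose leading term is multiplication by a quadratic, sets $M_0=U(\mathfrak{g}_1)v_{\mathcal{K}}$, and then counts $\dim U_k(\mathfrak{g}_2)M_0$ case by case via the monomial estimates of Propositions~\ref{ak}--\ref{n<m}. Your route---computing the Hilbert function of $\mathcal{B}_{\langle k'\rangle}$ directly from the single linear constraint $L(\alpha,\beta)=k'$, and for $n_1=n_2=n$ identifying $\mathbb{C}[\mathcal{Q}]$ with the coordinate ring of rank-$\le 2$ symmetric $n\times n$ matrices---is cleaner and, as you point out, explains \emph{why} the symplectic answer is uniformly $2n-1$ while the orthogonal answers split into several cases: here there is no Laplacian kernel, only a hyperplane slice.

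There is, however, a gap in the lower bound when $n_1<n_2$. Your cofinality of the $U_p$-filtration with the total-degree filtration is established only for $n_1=n_2$ (via the $3$-grading, which is genuinely available there). For $n_1<n_2$ the operators (5.1), (5.3), (5.5), (5.6) are inhomogeneous of degrees $\{0,2\}$, so no such grading exists, and you defer to \cite{Bai-sl}. But that paper, like the present one, computes GK dimensions by the same explicit $U_k(\mathfrak{g}_2)M_0$ count rather than by any general cofinality principle; the citation does not supply what you need. Without cofinality your lattice-point count yields only $d\le 2n-1$. To get the matching lower bound you must either carry out the paper's explicit estimate, or adapt your determinantal argument: show that the top-degree symbols of $\mathfrak{g}_2$ generate a subalgebra of $\mathcal{B}$ of Krull dimension $2n-1$ and that acting on $v_{\mathcal{K}}$ by $U_p(\mathfrak{g}_2)$ produces, in top degree, all of $\mathbb{C}[\mathcal{Q}]_{\le p}\cdot v_{\mathcal{K}}$. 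The $3$-graded and $n_1=n_2=n$ parts of your argument are correct as written.
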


\begin{Rem}For this case, the minimal GK-dimension is $n$. From our paper \cite{Bai-Hu}, the secondly minimal GK-dimension and is $2n-1$.  So all the modules in the above theorem have the secondly minimal GK-dimension.
\end{Rem}

{\bf Acknowledgements.}The author is partially supported by 
This work is partially supported  by NSFC Grant No. 11601394 and the Fundamental Research Funds for the Central Universities Grant No. 2042016kf0041 from  Wuhan University. We would like to thank the referee for the comments on an earlier version
of this paper.


\section{Preliminaries on Gelfand-Kirillov Dimension}
We recall some  definitions and  properties of the Gelfand-Kirillov dimension.  Details may be found in Refs.\cite{Bo-Kr, Ja, Kr-Le, NOTYK, Vogan-78, Vogan-91}.

\begin{definition}
Let $A$ be an algebra (not necessarily associative) generated  by a
finite-dimensional subspace $V$. Let $V^n$ denote the linear span of
all products of length   at most $n$ in elements of $V$. The
\emph{Gelfand-Kirillov dimension} of $A$  is defined by:
$$GKdim(A) =  \limsup_{n\rightarrow \infty} \frac{\log\mathrm{dim}( V^{n} )}{\log n}.$$
\end{definition}

\begin{Rem}It is well-known that the above definition  is independent of  the choice of the
finite  dimensional  generating subspace $V$ (see Ref.\cite{Bo-Kr, Kr-Le}).
Clearly $GKdim(A)=0$ if and only if $\mathrm{dim}(A) <\infty$.
\end{Rem}
The notion of Gelfand-Kirillov dimension can be extended for left $A$-modules. In fact, we have the following definition.
\begin{definition} Let $A$ be  an algebra (not necessarily associative) generated  by a finite-dimensional subspace $V$. Let $M$ be a left $A$-module generated  by a finite-dimensional subspace $M_{0}$.  Let $V^n$ denote the linear span of all products of length  at most $n$  in elements of $V$.
The \emph{Gelfand-Kirillov dimension} $GKdim(M)$ of $M$  is defined by
$$GKdim(M) = \limsup_{n\rightarrow \infty}\frac{\log\mathrm{dim}( V^{n}M_{0} )}{\log n}.$$
\end{definition}

In particular,  let $\frk{g}$ be a complex Lie algebra. Let $A=\mathcal{U}(\frk g)$ be the enveloping algebra of $\frk g$, with the standard filtration given by $A_{n}=\mathcal{U}_{n}(\frk g)$, the subspace of $\mathcal{U}(\frk g)$ spanned by products of at most $n$-elements of $\frk g$. By the Poincar\'{e}-Birkhoff-Witt theorem (see Knapp \cite[Prop. 3.16]{Knapp}), the graded algebra $\text{gr} (\mathcal{U}(\frk g))$ is canonically isomorphic to the symmetric algebra $S(\frk g)$. Suppose $M$ is a $\mathcal{U}(\frk g)$-module generated  by a finite-dimensional subspace $M_{0}$. We set $M_{n}=\mathcal{U}_{n}(\frk g)M_{0}$.  Denote $\text{gr}M=\bigoplus\limits_{n=0}^{\infty}\text{gr}_{n}M$, where $\text{gr}_{n}M=M_{n}/M_{n-1}$. Then $\text{gr}M$ becomes a graded $S(\frk g)$-module. We denote $\dim(M_{n})$ by $\varphi_{M}(n)$. Then we have the following lemma.

\begin{Lem}\label{hi-se}
\emph{(Hilbert-Serre \cite[Chapter VII. Th.41]{Za-Sa} )}
 With the notations as above, there exists a unique polynomial $\tilde{\varphi}_{M}(n)$ such that $\varphi_{M}(n)=\tilde{\varphi}_{M}(n)$ for large $n$. The leading term of $\tilde{\varphi}_{M}(n)$ is $$\frac{c(M)}{(d_{M})!}n^{d_{M}},$$ where $c(M)$ is an integer.
\end{Lem}
\begin{Rem}

From the definition of Gelfand-Kirillov dimension, we know $$GKdim(M)=\limsup_{n\rightarrow \infty} \frac{\log\dim (U_{n}(\frk g)M_{0} )}{\log n}=\limsup_{n\rightarrow \infty} \frac{\log\tilde{\varphi}_{M}(n)}{\log n}=d_{M}=\dim \mathscr{V}(M).$$
\end{Rem}

\begin{ex}
Let $M=\mathbb{C}[x_{1},...,x_{k}]$. Then $M$ is an algebra generated by the  finite-dimensional subspace $V=Span_{\mathbb{C}}\{x_{1},...,x_{k}\}$.  So $M_{n}=V^{n}=\bigoplus\limits_{0\leq q \leq n}P_{q}[x_{1},...,x_{k}]$ is the subset of homogeneous polynomials of degree $\leq n$.
 Then \begin{align*}\varphi_{M}(n)=&\sum\limits_{0\leq q \leq n}\dim_{\mathbb{C}}(P_{q}[x_{1},...,x_{k}])\\
=&\sum\limits_{0\leq q\leq n}\binom{k+q-1}{q}\\
=&\binom{k+n}{n}\\
=&\frac{n^k}{k!}+O(n^{k-1}).\end{align*}
Then we have $GKdim(M)=k.$
\end{ex}

\section{Proof of the main theorem for $\mathfrak{o}(2n,\mbb{C})$}
We keep the same notations with the introduction. Through the paper we always take $\mathcal {K}=\sum\limits_{i,j=1}^{n}\mathbb{C}(E_{i,j}-E_{n+j,n+i})$, and $\mathcal{K}_{+}=\sum_{1\leq i<j\leq n}\mbb{C}(E_{i,j}-E_{n+j,n+i})$.
A weight vector $v$ in $\mathcal{B}$ is called a   \emph{$\mathcal{K}$-singular vector} if $\mathcal{K}_{+}(v)=0$.

We simply write  $E_{i,j}|_{\mathcal{B}}$ as $E_{i,j}$.
Take
\eq \mathfrak{h}=\sum_{i=1}^{n}\mbb{C}(E_{i,i}-E_{n+i,n+i})\en
as a Cartan subalgebra of $\mathfrak{o}(2n,\mbb{C})$ and the subspace spanned
by positive root vectors:
\eq \mathfrak{o}(2n,\mbb{C})_+=\sum_{1\leq i<j\leq n}\mbb{C}(E_{i,j}-E_{n+j,n+i})+\sum_{1\leq i<j\leq n}\mbb{C}(E_{i,n+j}-E_{j,n+i}).\en
Correspondingly, we have \eq \mathfrak{o}(2n,\mbb{C})_-=\sum_{1\leq i<j\leq n}\mbb{C}(E_{j,i}-E_{n+i,n+j})+\sum_{1\leq i<j\leq n}\mbb{C}(E_{n+j,i}-E_{n+i,j}).\en

If we take $\mathcal{P}_{+}=\sum_{1\leq i<j\leq n}\mbb{C}(E_{i,n+j}-E_{j,n+i})$, then $\mathfrak{o}(2n,\mbb{C})_+=\mathcal{K}_{+}+\mathcal{P}_{+}$. From the PBW theorem we know that the irreducible $\mathfrak{o}(2n,\bb{C})$-module ${\mathcal{H}}_{\langle k \rangle}=U(\mathfrak{g})v_{\mathcal{K}}=U(\mathfrak{g_{-}+\mathcal{P}_{+}})v_{\mathcal{K}}$ for any $\mathcal K$-singular vector $v_{\mathcal{K}}$. In the following we will compute the Gelfand-Kirillov dimension of $\mathcal{H}_{\langle k \rangle}$ in  a case-by-case way.

Firstly we need the following two well-known lemmas.
\begin{Lem}\emph{(Multinomial theorem)}\\
Let $n,m$ be two positive integers,then
\eq \left|\{(k_1,k_2,...,k_m)\in \bb{N}^{m}|\sum\limits_{i=1}^{m}k_{i}=n\}\right|={n+m-1 \choose m-1}.
\en
\end{Lem}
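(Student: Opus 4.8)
The plan is to compute $GKdim({\mathcal B}_{\langle k'\rangle})$ by comparing the enveloping-algebra filtration with the total-degree filtration on ${\mathcal B}=\mbb C[x_1,\dots,x_n,y_1,\dots,y_n]$, and then to extract the leading power by a stars-and-bars count via the multinomial theorem. The key simplification, as opposed to the orthogonal cases, is that for $\mathfrak{sp}(2n,\mbb C)$ the operator $D$ is the grading (weight) operator rather than a genuine Laplacian, so ${\mathcal B}_{\langle k'\rangle}=\{f\in{\mathcal B}\mid D(f)=k'f\}$ is the \emph{full} $k'$-eigenspace, spanned by the monomials $x^\alpha y^\beta$ subject to the single linear constraint $\sum_{r>n_1}\alpha_r-\sum_{i\le n_1}\alpha_i+\sum_{i\le n_2}\beta_i-\sum_{r>n_2}\beta_r=k'$; this is why the answer $2n-1$ is uniform in $n_1,n_2$. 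Since $D$ commutes with the $\frk g$-action, ${\mathcal B}_{\langle k'\rangle}$ is a $\frk g$-submodule, and under the irreducibility hypothesis it is cyclic, ${\mathcal B}_{\langle k'\rangle}=\mathcal{U}(\frk g)M_0$ for a finite-dimensional generating space $M_0$ (say the $\mathcal K$-span of a $\mathcal K$-singular vector $v_{\mathcal K}$) of some total degree $\le d_0$. Throughout I record from the explicit formulas (\ref{xy})--(\ref{xy-}) that every generator of $\frk g$ acts by a differential operator changing total degree by $-2$, $0$, or $+2$; by Lemma \ref{hi-se} it then suffices to show that $\varphi_M(m)=\dim\mathcal{U}_m(\frk g)M_0$ is eventually a polynomial of degree $2n-1$.

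For the upper bound, writing ${\mathcal B}^{\le L}$ for the polynomials of total degree $\le L$, the degree-$\le 2$ property gives $\mathcal{U}_m(\frk g)M_0\subseteq {\mathcal B}_{\langle k'\rangle}\cap{\mathcal B}^{\le d_0+2m}$, so $\varphi_M(m)\le N(d_0+2m)$, where $N(L)$ denotes the number of monomials of ${\mathcal B}_{\langle k'\rangle}$ of total degree $\le L$. To estimate $N(L)$, split the exponent variables into the $p=n-n_1+n_2$ variables entering $D$ with coefficient $+1$ and the $q=n_1+n-n_2$ variables entering with coefficient $-1$; here $p+q=2n$ and $p,q\ge 1$ because $1\le n_1\le n_2\le n$. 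With $A,B$ the corresponding partial degree sums the constraint reads $A-B=k'$, so by the multinomial theorem $N(L)=\sum_{0\le 2B+k'\le L}\binom{B+k'+p-1}{p-1}\binom{B+q-1}{q-1}$. Each summand is asymptotic to $B^{p+q-2}/((p-1)!(q-1)!)=B^{2n-2}/((p-1)!(q-1)!)$, so summing over $B$ up to $\sim L/2$ gives $N(L)\sim c\,L^{2n-1}$ with $c>0$ explicit. Hence $GKdim({\mathcal B}_{\langle k'\rangle})\le 2n-1$.

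The lower bound is the crux and the step I expect to be the main obstacle. I would establish a reverse filtration comparison: there is a constant $C$ with ${\mathcal B}_{\langle k'\rangle}\cap{\mathcal B}^{\le L}\subseteq \mathcal{U}_{CL}(\frk g)M_0$, i.e.\ every monomial of degree $\le L$ in the module is produced from $M_0$ by applying $O(L)$ elements of $\frk g$. Concretely, I would exhibit inside $\mathcal{U}_m(\frk g)v_{\mathcal K}$ an explicit $\sim m^{2n-1}$-parameter family of linearly independent monomials: the quadratic multiplication operators $-x_ix_j,\,-y_iy_j,\,-x_iy_j$ occurring in (\ref{xy})--(\ref{xy-}) raise total degree two units at a time, while the degree-preserving operators from $\mathcal K\cong\mathfrak{gl}(n)$ redistribute exponents within a fixed degree, and together these move the exponent vector in all $2n-1$ directions compatible with $D=k'$. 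Since a monomial of degree $d$ is assembled in $O(d)$ such moves (irreducibility guarantees reachability at all, and the explicit operators control the number of steps), one gets $\varphi_M(m)\ge N(m/C)\sim c\,(m/C)^{2n-1}$, whence $GKdim\ge 2n-1$. Combined with the upper bound this yields $GKdim({\mathcal B}_{\langle k'\rangle})=2n-1$; the delicate part is making the $O(d)$ step count uniform using only the operators the non-canonical realization actually supplies.

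Finally I would treat the degenerate cases. When $n_1=n_2$ the eigenspace ${\mathcal B}_{\langle 0\rangle}$ need not be irreducible, and when $n_1=n_2=n$ it splits into two irreducible components; in either situation both filtration inclusions still apply to each component, since each component contains a positive-density family of the monomials of ${\mathcal B}_{\langle 0\rangle}$ (singled out by the relevant parity or invariant). Thus $N(L)$ is reduced only by a constant factor, the leading power $2n-1$ is unchanged, and each such component also has Gelfand-Kirillov dimension $2n-1$, completing the proof.
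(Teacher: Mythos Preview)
Your proposal does not address the stated lemma at all. The statement under review is the elementary stars-and-bars identity
\[
\left|\{(k_1,\dots,k_m)\in\bb N^m : \textstyle\sum_i k_i=n\}\right|=\binom{n+m-1}{m-1},
\]
which the paper records as a well-known fact without proof. What you have written is instead a proof strategy for Theorem~1.3, the Gelfand--Kirillov dimension computation for the $\mathfrak{sp}(2n,\mbb C)$-modules ${\mathcal B}_{\langle k'\rangle}$. You even \emph{invoke} the multinomial theorem (``by the multinomial theorem $N(L)=\sum\dots$'') rather than proving it. So there is a basic mismatch between the target statement and your argument.

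If the intended target really was the lemma, a one-line bijective proof suffices: an $m$-tuple $(k_1,\dots,k_m)$ with $\sum k_i=n$ corresponds to a placement of $m-1$ bars among $n+m-1$ positions (the remaining $n$ positions being stars), giving $\binom{n+m-1}{m-1}$. If instead you meant to prove Theorem~1.3, note that the paper's approach is different from yours: rather than sandwiching the $\mathcal U_m(\frk g)$-filtration between total-degree filtrations on $\mathcal B$, the paper splits $\frk g(\mathcal P_+)_-$ into a locally nilpotent part $\frk g_1$ and an injective (multiplication-type) part $\frk g_2$, sets $M_0=U(\frk g_1)v_{\mathcal K}$ finite-dimensional, and then counts $\dim U_k(\frk g_2)M_0$ directly by identifying leading monomials. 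Your degree-filtration approach is plausible for the upper bound, but the lower bound step (``a monomial of degree $d$ is assembled in $O(d)$ such moves'') is exactly the place where you would need the kind of explicit injective operators the paper isolates in $\frk g_2$; as written, that step is asserted rather than proved.
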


\begin{Lem}Let $p, n$ be two positive integers, then
$$\sum\limits_{i=0}^{n}i^p=\frac{(n+1)^{p+1}}{p+1}+\sum\limits_{k=1}^{p}\frac{B_k}{p-k+1}{p \choose k}(n+1)^{p-k+1},$$
where $B_k$ denotes a Bernoulli number.
\end{Lem}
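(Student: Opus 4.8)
The plan is to establish the identity as a formal power series statement via the exponential generating function of the Bernoulli numbers, which is the most economical route. Note first that the formula is only correct for the convention $B_1=-\tfrac12$ (the one under which $\frac{t}{e^t-1}=\sum_{k=0}^\infty B_k\frac{t^k}{k!}$); I would confirm this is the intended convention by checking $p=1$ and $p=2$ against $\sum_{i=0}^n i=\tfrac{n(n+1)}{2}$ and $\sum_{i=0}^n i^2=\tfrac{n(n+1)(2n+1)}{6}$, where after substituting $m=n+1$ both sides match. The starting point of the argument proper is the elementary geometric-sum identity
$$\sum_{i=0}^n e^{it}=\frac{e^{(n+1)t}-1}{e^t-1},$$
understood as an identity in $\bb{C}[[t]]$.

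The key device is to factor the right-hand side so that both factors are honest power series (this is what keeps us inside $\bb{C}[[t]]$, since $e^t-1$ is not itself invertible there):
$$\frac{e^{(n+1)t}-1}{e^t-1}=\frac{t}{e^t-1}\cdot\frac{e^{(n+1)t}-1}{t}.$$
The first factor is $\sum_{k\ge 0}B_k\frac{t^k}{k!}$ by the defining generating function, and the second is $\sum_{j\ge 0}\frac{(n+1)^{j+1}}{(j+1)!}t^j$, obtained by writing $e^{(n+1)t}-1=\sum_{j\ge 1}(n+1)^j t^j/j!$ and shifting the index.

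Next I would extract the coefficient of $t^p/p!$ from both sides. On the left, $\sum_{i=0}^n e^{it}=\sum_{p\ge 0}\big(\sum_{i=0}^n i^p\big)\frac{t^p}{p!}$, so this coefficient is exactly the power sum in question. On the right, the Cauchy product gives the coefficient of $t^p$ as $\sum_{k=0}^p\frac{B_k}{k!}\cdot\frac{(n+1)^{p-k+1}}{(p-k+1)!}$; multiplying by $p!$ and using $\frac{p!}{k!(p-k+1)!}=\frac{1}{p-k+1}\binom{p}{k}$ yields
$$\sum_{i=0}^n i^p=\sum_{k=0}^p\frac{B_k}{p-k+1}\binom{p}{k}(n+1)^{p-k+1}.$$
Splitting off the $k=0$ term, where $B_0=1$ and $\binom{p}{0}=1$ produce $\frac{(n+1)^{p+1}}{p+1}$, gives the formula exactly as stated.

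The manipulations here (geometric sum, factorization, index shift, Cauchy product, binomial simplification) are all routine, so the only place that genuinely demands care is consistency of the Bernoulli convention, which the preliminary check on $p=1,2$ settles. If one prefers to avoid generating functions entirely, an alternative purely real-variable proof is available: summing the telescoping identity $(i+1)^{p+1}-i^{p+1}=\sum_{j=0}^p\binom{p+1}{j}i^j$ over $i=0,\dots,n$ gives the recursion $(n+1)^{p+1}=\sum_{j=0}^p\binom{p+1}{j}S_j(n)$ for $S_j(n):=\sum_{i=0}^n i^j$, and one then verifies by induction on $p$ that the proposed closed form satisfies this recursion, the inductive step reducing to the defining Bernoulli recurrence $\sum_{k=0}^{m-1}\binom{m}{k}B_k=0$.
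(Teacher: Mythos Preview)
Your generating-function derivation is correct and is the standard textbook proof of Faulhaber's formula; the convention check on $B_1=-\tfrac12$ is also the right precaution. The paper itself offers no proof of this lemma at all: it is simply stated as one of two ``well-known lemmas'' and then used to pass from $\sum_{m\le k} m^{2n-2}$ to a polynomial of degree $2n-1$ in $k$, so there is nothing in the paper to compare your argument against. In fact, for the purposes of the paper only the leading-term statement $\sum_{i=0}^n i^p=\frac{n^{p+1}}{p+1}+O(n^p)$ is ever needed, which is immediate from the Riemann-sum comparison with $\int_0^n x^p\,dx$; your full proof supplies strictly more than what the paper requires or provides.
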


From these two lemmas, we can get  several propositions.

\begin{Prop}\label{ak}Let $k\in \mathbb{N}$ and we denote $$M_{k}=\left\{\prod\limits_{\substack{1\leq i\leq n_1  \\ n_{1}+1\leq t\leq n}} ({x_i x_{t}})^{p_{it}}| \sum\limits_{\substack{1\leq i\leq n_1  \\ n_{1}+1\leq t\leq n}} p_{it}=k, p_{it}\in\mathbb{N} \right\}.$$ Then $$d_k=\dim Span_{\mathbb{R}}M_{k}={n_{1}+k-1 \choose k}{n-n_{1}+k-1 \choose k}\approx ak^{n-2},$$ for some constant a.
\end{Prop}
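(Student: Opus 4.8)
The plan is to realize $\mathrm{Span}_{\mathbb R}M_{k}$ as the span of a set of \emph{distinct} monomials in $x_{1},\dots,x_{n}$ and then simply count those monomials. Write $m=n-n_{1}$ for the number of ``large'' indices $t\in\overline{n_1+1,n}$. First I would note that each generator $\prod_{i,t}(x_{i}x_{t})^{p_{it}}$ of $M_{k}$ equals the monomial $x^{\gamma}$ whose exponent vector has $\gamma_{i}=\sum_{t}p_{it}$ for $i\in\overline{1,n_1}$, $\gamma_{t}=\sum_{i}p_{it}$ for $t\in\overline{n_1+1,n}$, and $\gamma_{j}=0$ for the remaining $j$; moreover $\sum_{i=1}^{n_{1}}\gamma_{i}=\sum_{t=n_{1}+1}^{n}\gamma_{t}=\sum_{i,t}p_{it}=k$. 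Since distinct monomials are linearly independent in ${\mathcal B}$, $d_{k}=\dim\mathrm{Span}_{\mathbb R}M_{k}$ is exactly the number of distinct monomials so obtained, i.e.\ the size of the image $I_{k}$ of the ``margin map'' $(p_{it})\mapsto\big((\gamma_{i})_{i\le n_{1}},(\gamma_{t})_{t>n_{1}}\big)$. Clearly $I_{k}\subseteq J_{k}:=\{(\alpha,\beta)\in{\mathbb N}^{n_{1}}\times{\mathbb N}^{m}\mid \sum_{i}\alpha_{i}=\sum_{t}\beta_{t}=k\}$.

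The crux is the reverse inclusion $J_{k}\subseteq I_{k}$: any pair of prescribed row/column margins with a common total sum is realized by some nonnegative integer matrix $(p_{it})$. I would prove this by a direct greedy construction (the ``northwest corner'' rule) together with an induction on $n_{1}+m$: put $p_{1,n_{1}+1}=\min(\alpha_{1},\beta_{n_{1}+1})$, subtract it from $\alpha_{1}$ and from $\beta_{n_{1}+1}$, discard whichever of the two indices is now exhausted, and recurse; the remaining margins still have equal totals, so the induction closes. Hence $I_{k}=J_{k}$ and $d_{k}=|J_{k}|$. This realizability step (equivalently, the nonemptiness of the transportation polytope with integer margins) is the only point that is not pure bookkeeping; everything else is counting.

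Finally, in an element of $J_{k}$ the $\alpha$-part and the $\beta$-part can be chosen independently, so by the multinomial theorem quoted above
\eq |J_{k}|=\Big|\{\alpha\in{\mathbb N}^{n_{1}}\mid\textstyle\sum_{i}\alpha_{i}=k\}\Big|\cdot\Big|\{\beta\in{\mathbb N}^{m}\mid\textstyle\sum_{t}\beta_{t}=k\}\Big|=\binom{n_{1}+k-1}{k}\binom{m+k-1}{k},\en
which is precisely $\binom{n_{1}+k-1}{k}\binom{n-n_{1}+k-1}{k}$. Each factor is a polynomial in $k$: $\binom{n_{1}+k-1}{k}=\binom{n_{1}+k-1}{n_{1}-1}$ has degree $n_{1}-1$ and leading coefficient $1/(n_{1}-1)!$, while $\binom{m+k-1}{k}$ has degree $m-1$ and leading coefficient $1/(m-1)!$. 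Therefore $d_{k}$ is a polynomial in $k$ of degree $(n_{1}-1)+(m-1)=n-2$ with leading coefficient $a=\big((n_{1}-1)!\,(n-n_{1}-1)!\big)^{-1}$, so $d_{k}\approx a k^{n-2}$, as claimed (here one uses $1\le n_{1}\le n-1$, the range in which $M_{k}$ is relevant).
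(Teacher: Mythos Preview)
Your proof is correct and follows the same approach as the paper: identify the elements of $M_{k}$ as monomials, show that the distinct monomials arising are exactly those of the form $\prod_{i\le n_1}x_i^{\alpha_i}\prod_{t>n_1}x_t^{\beta_t}$ with $\sum\alpha_i=\sum\beta_t=k$, and count these by the multinomial theorem. The paper's own argument simply asserts the equality $d_k=\binom{n_1+k-1}{k}\binom{n-n_1+k-1}{k}$ without justifying the surjectivity of the margin map, whereas you supply that step explicitly via the northwest-corner rule; so your version is in fact more complete than the original.
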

\begin{proof}From the definition of $M_k$, we know that all the elements in $M_k$ are monomials and they must form a basis for $Span_{\mathbb{R}}M_{k}$. Thus
\begin{align*}d_k=&\dim Span_{\mathbb{R}}M_{k}=\#\left\{\prod\limits_{\substack{1\leq i\leq n_1  \\ n_{1}+1\leq t\leq n}} ({x_i x_{t}})^{p_{it}}| \sum\limits_{\substack{1\leq i\leq n_1  \\ n_{1}+1\leq t\leq n}} p_{it}=k, p_{it}\in\mathbb{N} \right\}\\
=&\#\left\{\prod\limits_{\substack{1\leq i\leq n_1 }} (x_i)^{\sum_{n_{1}+1\leq t\leq n}p_{it}}\prod\limits_{\substack{n_{1}+1\leq t\leq n}} (x_t)^{\sum_{1\leq i\leq n_1 }p_{it}}| \sum\limits_{\substack{1\leq i\leq n_1  \\ n_{1}+1\leq t\leq n}} p_{it}=k, p_{it}\in\mathbb{N} \right\}\\
=&{n_{1}+k-1 \choose k}{n-n_{1}+k-1 \choose k}\approx ak^{n-2}, \emph{~~for some constant~} \emph{a}.
\end{align*}
\end{proof}

The idea of the proof for the following propositions are very simple: Denote ${\mathcal{B}}=\bb{C}[x_1,...,x_n,y_1,...,y_n]$. We define a partial order (i.e., dictionary order) on the monomials of ${\mathcal{B}}$:
$$x_{1}^{p_{1}}...x_n^{p_n}y_{1}^{p_{n+1}}...y_n^{p_{2n}}\preceq x_{1}^{p'_{1}}...x_n^{p'_n}y_1^{p'_{n+1}}...y_n^{p'_{2n}}$$
if there exists $1\leq m \leq 2n$, such that $p_i=p'_i$ for any $i<m$ and $p_m<p'_m$. We can also interchange the place of $x_1$ and some $x_j$ (or $y_j$), then define a similar partial order.

Suppose $I=\cup\{i\}$ is a given index set and  $P$ is a  set  of homogeneous polynomials which are products of some binomials $(f_i-g_i)^{a_i}$ ($f_i$ and $g_i$ are monomials of  degree $2$ in ${\mathcal{B}}$, and $\sum \limits_{i\in I}a_i=k$ is a constant), i.e., $P=\{\prod\limits_{i\in I}(f_i-g_i)^{a_i}| \sum a_i=k\}$. We fix $i\in \overline{1,n}.$ We choose two subsets $I_1$ and $I_2$ in $I$, such that $I_1\cap I_2=\emptyset$, and $I_1\cup I_2=\cup \{i\}=I$. Suppose $f_{i_p}$ is a multiple of $x_i$ and $g_{i_p}$ is not a multiple of $x_i$ when $i_p\in I_1$, and $g_{i_l}$ is a multiple of $x_i$ and $f_{i_l}$ is not a multiple of $x_i$ when $i_l\in I_2$.

Then we have $$\dim Span_{\mathbb{R}}P\geq \#\{\prod\limits_{i_p\in I_1}(f_{i_p})^{a_{i_p}}\cdot\prod\limits_{i_l\in I_2}(g_{i_l})^{a_{i_l}}|I=I_1\sqcup I_2, \sum a_{i_p}+\sum a_{i_l}=k\},$$
here we interchange the place of $x_1$ and $x_i$. So the monomial $\prod\limits_{i_p\in I_1}(f_{i_p})^{a_{i_p}}\cdot\prod\limits_{i_l\in I_2}(g_{i_l})^{a_{i_l}}$ which contain the largest power of $x_i$ is the leading term in the expression of $\prod\limits_{i\in I}(f_i-g_i)^{a_i}$.

\begin{Prop}\label{n=n}
\begin{enumerate}
              \item $(n_1=n_2=1)$ Let $k\in \mathbb{N}$ and we denote $$T_k=\left\{\prod\limits_{\substack{2\leq p<t\leq n}} ({x_p y_{t}}-x_{t}y_{p})^{g_{pt}}\cdot \prod\limits_{2\leq t \leq n}(x_1x_t-y_1y_t)^{g_{1t}}| \sum\limits_{1\leq p<t\leq n} g_{pt}=k, g_{pt}\in\mathbb{N} \right\}.$$
Then we have \begin{align*} d_k=\dim Span_{\mathbb{R}}T_{k}\approx\left\{
                                                   \begin{array}{ll}
                                                     c_0k^{2n-4}, & \text{\emph{if}~} {n=2 \mathrm{~or~} n=3;} \\
                                                     c_1k^{2n-3}, & \text{\emph{if}~} {n=4;} \\
                                                     c_2k^{2n-2}, & \text{\emph{if}~} {n\geq 5.}
                                                   \end{array}
                                           \right. \end{align*}
Here $c_0, c_1$ and $c_2$  are some positive constants which are independent of $k$.
              \item $(n_1=n_2=n-1)$ Let $k\in \mathbb{N}$ and we denote $$S_k=\left\{\prod\limits_{\substack{1\leq i<r\leq n-1}} ({x_i y_{r}}-x_{r}y_{i})^{f_{ir}}\cdot \prod\limits_{1\leq i \leq n-1}(x_ix_n-y_iy_n)^{f_{in}}| \sum\limits_{1\leq i<r\leq n} f_{ir}=k, f_{ir}\in\mathbb{N} \right\}.$$
Then we have \begin{align*} d_k=\dim Span_{\mathbb{R}}S_{k}\approx\left\{
                                                   \begin{array}{ll}
                                                     a_0k^{2n-4}, & \text{\emph{if}~} {n=2 \mathrm{~or~} n=3;} \\
                                                     a_1k^{2n-3}, & \text{\emph{if}~} {n=4;} \\
                                                     a_2k^{2n-2}, & \text{\emph{if}~} {n\geq 5.}
                                                   \end{array}
                                           \right. \end{align*}
Here $a_0, a_1$ and $a_2$  are some positive constants which are independent of $k$.
    \item $(n_1=n_2=n)$Let $k\in \mathbb{N}$ and we denote $$R_k=\left\{\prod\limits_{\substack{1\leq i<r\leq n}} ({x_i y_{r}}-x_{r}y_{i})^{f_{ir}}| \sum\limits_{1\leq i<r\leq n} f_{ir}=k, f_{ir}\in\mathbb{N} \right\}.$$
Then we have \begin{align*} d_k=\dim Span_{\mathbb{R}}R_{k}\approx\left\{
                                                   \begin{array}{ll}
                                                     b_0k^{2n-4}, & \text{\emph{if}~} {n=2 \mathrm{~or~} n=3;} \\
                                                     b_1k^{2n-3}, & \text{\emph{if}~} {n=4;} \\
                                                     b_2k^{2n-2}, & \text{\emph{if}~} {n\geq 5.}
                                                   \end{array}
                                           \right. \end{align*}
Here $b_0, b_1$ and $b_2$  are some positive constants which are independent of $k$.
\item  $(1<n_1=n_2<n-1)$Suppose $1<n_1<n-1$.
Let $k\in \mathbb{N}$ and we denote \begin{align*}U_k&=\left\{\prod\limits_{\substack{n_1+1\leq p<t\leq n}} ({x_p y_{t}}-x_{t}y_{p})^{g_{pt}}\cdot \prod\limits_{\substack{1\leq i<r\leq n_1}} ({x_i y_{r}}-x_{r}y_{i})^{g_{ir}}\right.\\
&\left.\cdot \prod\limits_{\substack{1\leq i\leq n_1\\ n_1+1\leq t \leq n}}(x_ix_t-y_iy_t)^{g_{1t}}| \sum\limits_{1\leq p<t\leq n} g_{pt}=k, g_{pt}\in\mathbb{N} \right\}.\end{align*}
Then we have \begin{align*} d_k=\dim Span_{\mathbb{R}}U_{k}\approx\left\{
                                                   \begin{array}{ll}
                                                   e_1k^{2n-3}, & \text{\emph{if}~} {n=4;} \\
                                                     e_2k^{2n-2}, & \text{\emph{if}~} {n\geq 5.}
                                                   \end{array}
                                           \right. \end{align*}
Here $e_0, e_1$ and $e_2$  are some positive constants which are independent of $k$.
\end{enumerate}

\end{Prop}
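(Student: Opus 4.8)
The plan is to imitate the argument used for Propositions \ref{ak} and \ref{n=n}(1)--(3): reduce each dimension count to a purely combinatorial lattice-point count, extract a leading monomial from each relevant product of binomials, and then apply the Multinomial theorem together with the power-sum formula (Lemma 1.5) to read off the degree in $k$. The key observation in the last item is that $U_k$ is a product set of three independent families of binomials --- those indexed by pairs $n_1+1\le p<t\le n$ (of the form $x_py_t-x_ty_p$), those indexed by pairs $1\le i<r\le n_1$ (also of the form $x_iy_r-x_ry_i$), and the "mixed" ones indexed by pairs $(i,t)$ with $1\le i\le n_1<t\le n$ (of the form $x_ix_t-y_iy_t$), with the total exponent $\sum g_{pt}=k$. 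So I would first treat each family separately using the leading-term device described just before the proposition: in the product $\prod(f_i-g_i)^{a_i}$, fixing one variable (say $x_i$ after interchanging it with $x_1$ in the dictionary order) and splitting the index set $I=I_1\sqcup I_2$ according to whether $f_i$ or $g_i$ carries $x_i$, gives a monomial leading term, so distinct choices of exponents yield linearly independent leading monomials. This produces the lower bound $\dim\mathrm{Span}_{\mathbb R}U_k\ge \#\{\text{admissible monomials}\}$.

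Next I would count the admissible monomials. Ignoring the grading constraint $\sum g_{pt}=k$ first: the family over pairs in $\overline{n_1+1,n}$ lives on a space of "dimension" governed by the number of variables $x_{n_1+1},\dots,x_n,y_{n_1+1},\dots,y_n$ it can touch, giving a contribution of order $k^{2(n-n_1)-2}$ when $n-n_1\ge 2$ (this is exactly the $R_k$-type count of Proposition \ref{n=n}(3) with $n$ replaced by $n-n_1$); similarly the family over pairs in $\overline{1,n_1}$ contributes order $k^{2n_1-2}$ when $n_1\ge 2$; and the mixed family, being of the same shape as $M_k$ in Proposition \ref{ak} but with the full binomial $x_ix_t-y_iy_t$, contributes order $k^{2n_1+2(n-n_1)-2}=k^{2n-2}$ on its own --- already the top order. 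Since the three families involve overlapping variable sets but independent exponent parameters, and the overall homogeneity in $k$ is shared, the correct bookkeeping is to let the mixed family "use up" essentially all of $k$ and let the other two contribute only lower-order corrections; the net effect is that $d_k$ grows like $k^{2n-2}$ when $n\ge 5$ and like $k^{2n-3}$ when $n=4$, matching the stated formula. I would make this precise by the standard trick of summing over the split of $k$ among the three families: $d_k\approx \sum_{k_1+k_2+k_3=k} (\text{count}_1)(k_1)\,(\text{count}_2)(k_2)\,(\text{count}_3)(k_3)$, and then invoking Lemma 1.5 to evaluate the resulting nested power sums, whose degree is $(\deg_1+1)+(\deg_2+1)+(\deg_3+1)-1$.

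I also need the matching upper bound $\dim\mathrm{Span}_{\mathbb R}U_k\le Ck^{2n-2}$ (resp.\ $k^{2n-3}$). This follows because every element of $U_k$ is a homogeneous polynomial of degree $2k$ supported on monomials $x^\alpha y^\beta$ with a bounded number of free exponent parameters: expanding each binomial, the monomials appearing all have the special shape dictated by the three families, so they lie in a subspace whose dimension is bounded by the same lattice-point count up to a constant (each binomial contributes at most a factor $2$ to the number of monomials, and $2^k$ is absorbed into... no --- rather, one bounds by the span of all monomials of the prescribed multidegree type, which is again a product of multinomial coefficients of the same degree in $k$). I expect the main obstacle to be precisely this upper bound in the $n_1=n_2$ mixed case: unlike the pure families $R_k,S_k,T_k$, here the binomial $x_ix_t-y_iy_t$ mixes $x$'s and $y$'s, so a monomial of the target multidegree can arise from several different exponent vectors, and one must check that the resulting span does not exceed the lower bound's order --- i.e.\ that the "collision" multiplicities are bounded by a constant independent of $k$, or at worst polynomially bounded of strictly lower degree. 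Once the leading-term argument is correctly set up (choosing the dictionary order so that the leading monomials of distinct exponent vectors are genuinely distinct), this reduces to checking that the leading-term map is injective on exponent vectors, which is the same bookkeeping as in parts (1)--(3) and should go through verbatim with $n$, $n_1$ playing the roles there played by $n$ alone.
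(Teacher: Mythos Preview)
Your overall architecture---leading-term extraction for the lower bound, monomial-span count for the upper bound---is exactly the paper's strategy for parts (1) and (3), and the paper simply says ``The proof of (4) is similar to (1).''  But your concrete plan for (4) contains a real error and a methodological gap.

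\textbf{The error.}  You claim the mixed family $\prod_{i\le n_1<t}(x_ix_t-y_iy_t)^{g_{it}}$, $\sum g_{it}=k$, already contributes order $k^{2n-2}$.  This is false: the set has only $n_1(n-n_1)$ exponent parameters, so its span has dimension at most $\binom{k+n_1(n-n_1)-1}{n_1(n-n_1)-1}$, which for $n=4$, $n_1=2$ is $\sim k^{3}$, not $k^{6}$.  Since the proposition asserts $d_k\sim k^{5}$ for $n=4$, a single family giving $k^{6}$ would contradict the very statement you are proving.

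\textbf{The methodological gap.}  The convolution heuristic
\[
d_k \;\approx\; \sum_{k_1+k_2+k_3=k}\mathrm{count}_1(k_1)\,\mathrm{count}_2(k_2)\,\mathrm{count}_3(k_3)
\]
is valid for the dimension of a tensor product, i.e.\ when the three families live on \emph{disjoint} variable sets.  Here all three families share the variables $x_1,\dots,x_n,y_1,\dots,y_n$, so the span of the product set is \emph{not} the product of the spans.  Concretely, even with corrected individual counts (say $d_1\sim k_1^{2(n-n_1)-4}$, $d_2\sim k_2^{2n_1-4}$, $d_3$ as above), the convolution gives the wrong exponent already for $n_1=2$, $n=5$.

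What the paper actually does---and what you should do for (4)---is skip the decomposition entirely and run the leading-term argument on the \emph{whole} product $\prod_{1\le p<t\le n}(\cdots)^{g_{pt}}$ at once, exactly as in the detailed $n=4$ and $n\ge 5$ cases of part (1).  You pick, for each binomial, the monomial term carrying the highest power of a fixed variable in the chosen order, and then verify (by inspection of the resulting exponent pattern, as displayed in the paper's proof of (1)) that the map from exponent tuples $(g_{pt})$ to leading monomials is injective.  The upper bound then follows from the coarse inclusion of the span into the space of monomials satisfying the two bihomogeneity constraints, as in the paper's estimate for $T_k$ when $n\ge 5$.
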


\begin{proof}The statements $(1)$ and $(2)$ are dual to each other. The proof of $(4)$ is similar to $(1)$. So we only need to give the proof for $(1)$ and $(3)$.

\text{Proof of $(1)$:}

When $n=3$, we have
\begin{align*}d_k&=\dim Span_{\mathbb{R}}T_{k}\\
&=\dim Span_{\mathbb{R}}\left\{ ({x_1 x_{2}}-y_{1}y_{2})^{g_{12}}({x_1 x_{3}}-y_{1}y_{3})^{g_{13}}(x_2y_3-x_3y_2)^{g_{23}}|\sum g_{pt}=k\right\}\\
&\geq \dim Span_{\mathbb{R}}\left\{({x_1 x_{2}})^{g_{12}}(x_1x_3)^{g_{13}}(x_2y_3)^{g_{23}}|\sum g_{pt}=k   \right\}\\
&=\dim Span_{\mathbb{R}}\left\{({ x_{1}})^{g_{12}+g_{13}}(y_3)^{g_{23}}\right.\\
&\quad\quad\quad\quad\quad\quad\cdot(x_2)^{g_{12}+g_{23}}(x_3)^{g_{13}}
\left.|\sum g_{pt}=k   \right\}\\
&\approx c_{00}k^2, \emph{~for some constant }c_{00}.
\end{align*}
On the other hand, we have $d_k=\dim Span_{\mathbb{R}}T_{k}\leq c_{01}k^{3-1}=c_{01}k^{2},$ for some positive constant $c_{01}$. So we must have $d_k=\dim Span_{\mathbb{R}}T_{k}\approx c_{0}k^{2}=c_{0}k^{2n-4},$ for some positive constant $c_{0}$.

 When $n=4$, we have
\begin{align*}d_k&=\dim Span_{\mathbb{R}}T_{k}\\
&=\dim Span_{\mathbb{R}}\left\{ ({x_1 x_{2}}-y_{1}y_{2})^{g_{12}}({x_1 x_{3}}-y_{1}y_{3})^{g_{13}}({x_1 x_{4}}-y_{1}y_{4})^{g_{14}}\right.\\
&\quad\quad\quad\quad\quad\quad \left.(x_2y_3-x_3y_2)^{g_{23}}(x_2y_4-x_4y_2)^{g_{24}}(x_3y_4-x_4y_3)^{g_{34}}|\sum g_{pt}=k\right\}\\
&\geq \dim Span_{\mathbb{R}}\left\{({y_1 y_{2}})^{g_{12}}(x_1x_3)^{g_{13}}(x_1x_4)^{g_{14}}(x_3y_2)^{g_{23}}(x_2y_4)^{g_{24}}(x_4y_3)^{g_{34}}|\sum g_{pt}=k   \right\}\\
&=\dim Span_{\mathbb{R}}\left\{({ y_{2}})^{g_{12}+g_{23}}(x_1)^{g_{13}+g_{14}}(y_3)^{g_{34}}(y_4)^{g_{24}}\right.\\
&\quad\quad\quad\quad\quad\quad\cdot({y_1 })^{g_{12}}(x_3)^{g_{13}+g_{23}}(x_4)^{g_{14}+g_{34}}(x_2)^{g_{24}}
\left.|\sum g_{pt}=k   \right\}\\
&\approx c_{10}k^5, \emph{~for some constant }c_{10}.
\end{align*}
On the other hand, we have $d_k=\dim Span_{\mathbb{R}}T_{k}\leq c_{11}k^{6-1}=c_{11}k^{5},$ for some positive constant $c_{11}$. So we must have $d_k=\dim Span_{\mathbb{R}}T_{k}\approx c_{1}k^{5}=c_{1}k^{2n-3},$ for some positive constant $c_{1}$.

When $n\geq 5$, we have\begin{align*}d_k&=\dim Span_{\mathbb{R}}T_{k}\\
&=\dim Span_{\mathbb{R}}
\left\{\prod\limits_{\substack{2\leq p<t\leq 4}} ({x_p y_{t}}-x_{t}y_{p})^{g_{pt}}\cdot \prod\limits_{2\leq t \leq 4}(x_1x_t-y_1y_t)^{g_{1t}}\right.\\
& \quad\quad\quad\quad\quad\quad\left.\cdot\prod\limits_{\substack{2\leq p<t\leq n \\ t\geq 5}} ({x_p y_{t}}-x_{t}y_{p})^{g_{pt}}\cdot \prod\limits_{ t \geq 5}(x_1x_t-y_1y_t)^{g_{1t}}   | \sum\limits_{1\leq p<t\leq n} g_{pt}=k, \right\}\\
&\geq \dim Span_{\mathbb{R}}\left\{({ y_{2}})^{g_{12}+g_{23}}(x_1)^{g_{13}+g_{14}}(y_3)^{g_{34}}(y_4)^{g_{24}}\prod\limits_{5\leq t\leq n}y_{t}^{g_{it}+g_{2t}}\prod\limits_{3\leq p<n}y_{p}^{\sum\limits_{p<t\leq n} g_{pt}}
\right.\\
&\quad\quad\quad\quad\quad\quad\cdot({y_1 })^{g_{12}+\sum\limits_{5\leq t\leq n} g_{1t}}(x_3)^{g_{13}+g_{23}}(x_4)^{g_{14}+g_{34}}(x_2)^{g_{24}+\sum\limits_{5\leq t\leq n} g_{2t}}\prod\limits_{5\leq t\leq n} x_{t}^{\sum\limits_{3\leq p<t}g_{pt}}\\
&\quad\quad\quad\quad\quad\quad\left.|\sum g_{pt}=k   \right\}\\
&\approx c_{20}k^{2n-2}, \emph{~for some constant }c_{20}.
\end{align*}

On the other hand, we have
\begin{align}\nonumber
d_k=&\dim Span_{\mathbb{R}}T_{k}\\\nonumber
\leq &\dim Span_{\mathbb{R}}\left\{\prod\limits ({x_1 })^{p_{1}}\prod({ y_{t}})^{q_{t}}\cdot\prod({ x_{t}})^{l_{t}}\prod(y_{1})^{f_{1}} |\right.\\\nonumber
&\quad\quad\quad\quad\quad\left. p_{1}+\sum\limits_{2\leq t\leq n} q_{t}=f_{1}+\sum\limits_{2\leq t\leq n} l_{t}=k \right\}\\
\approx & c_{21} k^{2n-2},  \emph{~for some constant }c_{21}.\nonumber
\end{align}\nonumber

So we must have $d_k=\dim Span_{\mathbb{R}}T_{k}\approx c_{2}k^{2n-2},$ for some positive constant $c_2$.

\hspace{1mm}

\text{Proof of $(3)$:}

When $n=2$, we have $d_k=\dim Span_{\mathbb{R}}R_{k}=\dim Span_{\mathbb{R}}\left\{ ({x_1 y_{2}}-x_{2}y_{1})^{k}\right\}=1$.

When $n=3$, we have
\begin{align*}d_k&=\dim Span_{\mathbb{R}}R_{k}\\
&=\dim Span_{\mathbb{R}}\left\{ ({x_1 y_{2}}-x_{2}y_{1})^{f_{12}}(x_1y_3-x_3y_1)^{f_{13}}(x_2y_3-x_3y_2)^{f_{23}}|f_{12}+f_{13}+f_{23}=k\right\}\\
&\geq \dim Span_{\mathbb{R}}\left\{({x_1 y_{2}})^{f_{12}}(x_3y_1)^{f_{13}}(x_2y_3)^{f_{23}}|f_{12}+f_{13}+f_{23}=k   \right\}\\
&={3+k-1 \choose k}\\
&\approx\frac{1}{2}k^2.
\end{align*}
On the other hand, we have $d_k=\dim Span_{\mathbb{R}}R_{k}\leq b_{00}k^{3-1}=b_{00}k^{2},$ for some positive constant $b_{00}$. So we must have $d_k=\dim Span_{\mathbb{R}}R_{k}\approx b_{0}k^{2}=b_{0}k^{2n-4},$ for some positive constant $b_{0}$.

When $n=4$, we have
\begin{align*}d_k&=\dim Span_{\mathbb{R}}R_{k}\\
&=\dim Span_{\mathbb{R}}\left\{\prod\limits_{\substack{1\leq i<r\leq 4}} ({x_i y_{r}}-x_{r}y_{i})^{f_{ir}}| \sum\limits_{1\leq i<r\leq 4} f_{ir}=k, f_{ir}\in\mathbb{N} \right\}\\
&\geq \dim Span_{\mathbb{R}}\left\{({x_1 y_{2}})^{f_{12}}  ({x_3 y_{1}})^{f_{13}} ({x_1 y_{4}})^{f_{14}} ({x_2 y_{3}})^{f_{23}} ({x_4 y_{2}})^{f_{24}} ({x_3 y_{4}})^{f_{34}}\right.\\
& \quad\quad\quad\quad\quad \quad \left.| \sum\limits_{1\leq i<r\leq 4} f_{ir}=k, f_{ir}\in\mathbb{N} \right\}\\
&=\dim Span_{\mathbb{R}}\left\{({x_1 })^{f_{12}+f_{14}}  ({x_3 })^{f_{13}+f_{34}}  ({x_4 })^{f_{24}} ({x_2})^{f_{23}}
 \cdot ({ y_{2}})^{f_{12}+f_{24}}  ({y_{1}})^{f_{13}} ({ y_{4}})^{f_{14}+f_{34}} ({y_{3}})^{f_{23}} \right.\\
& \quad\quad\quad\quad\quad \quad \left.| \sum\limits_{1\leq i<r\leq 4} f_{ir}=k, f_{ir}\in\mathbb{N} \right\}\\
&\approx b_{11}k^5, \emph{~for some constant }b_{11}.
\end{align*}
On the other hand, we have $d_k=\dim Span_{\mathbb{R}}R_{k}\leq b_{12}k^{6-1}=b_{12}k^{5},$ for some positive constant $b_{12}$. So we must have $d_k=\dim Span_{\mathbb{R}}R_{k}\approx b_{1}k^{5}=b_{1}k^{2n-3},$ for some positive constant $b_{1}$.

When $n\geq 5$, we have
\begin{align*}d_k&=\dim Span_{\mathbb{R}}R_{k}\\
&=\dim Span_{\mathbb{R}}\left\{\prod\limits_{\substack{1\leq i<r\leq 4}} ({x_i y_{r}}-x_{r}y_{i})^{f_{ir}}\cdot \prod\limits_{\substack{1\leq i<r\leq n \\ r\geq 5}}({x_i y_{r}}-x_{r}y_{i})^{f_{ir}} | \sum\limits_{1\leq i<r\leq n} f_{ir}=k, f_{ir}\in\mathbb{N} \right\}\\
&\geq \dim Span_{\mathbb{R}}\left\{\prod\limits_{\substack{1\leq i<r\leq 4}} ({x_i y_{r}}-x_{r}y_{i})^{f_{ir}}\cdot \prod\limits_{5\leq r\leq n}({x_1 y_{r}})^{f_{1r}}  ({x_2 y_{r}})^{f_{2r}} ({x_r y_{3}})^{f_{3r}} ({x_r y_{4}})^{f_{4r}} \right.\\
& \quad\quad\quad\quad\quad\quad \quad \left.| \sum\limits_{1\leq i<r\leq n} f_{ir}=k, f_{ir}\in\mathbb{N} \right\}\\
&=\dim Span_{\mathbb{R}}\left\{({x_1 })^{f_{12}+f_{14}+\sum f_{1r}}  ({x_3 })^{f_{13}+f_{34}}  ({x_4 })^{f_{24}} ({x_2})^{f_{23}+\sum f_{2r}}\prod\limits_{5\leq r\leq n}(x_r)^{f_{3r}+f_{4r}}\right.\\
&\quad\quad\quad\quad\quad \quad \quad\left. \cdot ({ y_{2}})^{f_{12}+f_{24}}  ({y_{1}})^{f_{13}} ({ y_{4}})^{f_{14}+f_{34}+\sum f_{4r}} ({y_{3}})^{f_{23}+\sum f_{3r}} \prod\limits_{5\leq r\leq n}(y_r)^{f_{1r}+f_{2r}}\right.\\
& \quad\quad\quad\quad\quad \quad \quad\left.| \sum\limits_{1\leq i<r\leq n} f_{ir}=k, f_{ir}\in\mathbb{N} \right\}\\
&\approx b_{21}k^{2n-2}, \emph{~for some constant }b_{21}.
\end{align*}

On the other hand, we have
\begin{align*}d_k&=\dim Span_{\mathbb{R}}R_{k}\\
&\leq \dim Span_{\mathbb{R}} \left\{\prod\limits_{\substack{1\leq i\leq n  \\1 \leq r\leq n }} {x_i}^{a_i} y_{r}^{b_{r}}
 | \sum a_{i}=\sum b_{r}=k \right\}\\
&\approx b_{22}k^{2n-2},\end{align*} for some positive constant $b_{22}$. So we must have $d_k=\dim Span_{\mathbb{R}}R_{k}\approx b_{2}k^{2n-2},$ for some positive constant $b_{2}$.
\end{proof}

\begin{Prop}\label{n<m}
\begin{enumerate}
  \item $(1<n_1<n_2=n-1)$Suppose $1<n_1< n-1$. Let $k\in \mathbb{N}$ and we denote
 \begin{align*} \nonumber  V_k&=\left\{\prod\limits(x_ix_s)^{p_{is}} \prod\limits(x_iy_s)^{l_{is}}\prod\limits(x_sy_n)^{u_{s}}\prod\limits(y_{s}y_{n})^{q_{s}}\prod\limits({x_{i} x_{n}}-y_{i}y_{n})^{h_{i}}\prod\limits({x_{i} y_{r}}-x_{r}y_{i})^{f_{ir}}|\right.\\
  &\quad \left.\sum\limits_{\substack{1\leq i\leq n_1  \\n_1+1\leq s\leq n-1}}p_{is}+\sum\limits_{\substack{1\leq i\leq n_1  \\n_1+1\leq s\leq n-1}}l_{is}+\sum\limits_{\substack{n_1+1\leq s\leq n-1 }}u_{s}+ \sum\limits_{\substack{n_1+1\leq s\leq n-1 }}q_{s}\right.\\
  &\quad\quad \left.+\sum\limits_{\substack{1\leq i\leq n_1 }}h_{i}+\sum\limits_{1\leq i<r\leq n_1}=k \right\},\\
  \end{align*}
 then
$$d_k=\dim Span_{\mathbb{R}}V_{k}\approx bk^{2n-2},$$ for some constant $b$.

\item $(1=n_1<n_2<n-1)$Suppose $1<n_2< n-1$. Let $k\in \mathbb{N}$ and we denote
\begin{align*}
 W_k&=\left\{\prod\limits(x_1x_s)^{p_{s}} \prod\limits(x_1y_s)^{l_{s}}\prod\limits(x_sy_t)^{u_{st}}\prod\limits(y_{s}y_{t})^{q_{st}}\prod\limits({x_{1} x_{t}}-y_{1}y_{t})^{h_{t}}\prod\limits({x_{p} y_{t}}-x_{t}y_{p})^{g_{pt}}|\right.\\
&\quad \left.\sum\limits_{2\leq s\leq n_2}p_{s}+\sum\limits_{2\leq s\leq n_2}l_{s}
+\sum\limits_{\substack{2\leq s\leq n_2 \\ n_2+1\leq t\leq n}}u_{st}+ \sum\limits_{\substack{2\leq s\leq n_2 \\ n_2+1\leq t\leq n }}q_{st}+\sum\limits_{\substack{n_2+1\leq t\leq n }}h_{t}+\sum\limits_{n_2+1\leq p<t\leq n}g_{pt}=k \right\},\\
\end{align*}
 then
$$d_k=\dim Span_{\mathbb{R}}W_{k}\approx \beta k^{2n-2},$$ for some constant $\beta$.
\item$(1<n_1<n_2=n)$

Suppose $1<n_1<n$.  Let $k\in \mathbb{N}$ and we denote
 \begin{align*} \nonumber  Z_k&=\left\{\prod\limits(x_ix_s)^{p_{is}}\prod\limits(x_{i}y_{s})^{l_{is}}\prod\limits({x_{i} y_{r}}-x_{r}y_{i})^{f_{ir}}|\right.\\
  &\quad\quad \left.\sum\limits_{\substack{1\leq i\leq n_1  \\n_1+1\leq s\leq n}}p_{is}+\sum\limits_{\substack{1\leq i\leq n_1  \\n_1+1\leq s\leq n}}l_{is}+ \sum\limits_{\substack{1\leq i<r\leq n_1  }}f_{ir}=k \right\},
  \end{align*}
then
\begin{align*} d_k=\dim Span_{\mathbb{R}}Z_{k}\approx\left\{
                                                   \begin{array}{ll}
                                                     \alpha_0k^{2n-3}, & \text{\emph{if}~} {n_1=2 <n;} \\
                                                     \alpha_1k^{2n-2}, & \text{\emph{if}~} {3\leq n_1<n.} \end{array}
\right. \end{align*}
Here $\alpha_0$ and $\alpha_1$  are some positive constants which are independent of $k$.
\item$(1<n_1<n_2<n-1)$

 Let $k\in \mathbb{N}$. Suppose $1<n_1< n_2<n-1$ and we denote
 \begin{align*} \nonumber  N^{\prime}_k&=\left\{\prod\limits(x_ix_s)^{p_{is}}\prod\limits(y_{s}y_{t})^{q_{st}}\prod\limits(x_iy_s)^{l_{is}}
\prod\limits(x_sy_t)^{u_{st}}\right.\\
& \quad  \left.\cdot \prod\limits({x_{i} x_{t}}-y_{i}y_{t})^{h_{it}}\prod\limits({x_{i} y_{r}}-x_{r}y_{i})^{f_{ir}}\prod\limits({x_{p} y_{t}}-x_{t}y_{p})^{g_{pt}}\right.\\
  &\quad \left. \mid \sum\limits_{\substack{1\leq i\leq n_1  \\n_1+1\leq s\leq n_2}}(p_{is}+l_{is})+\sum\limits_{\substack{n_1+1\leq s\leq n-1\\ n_2+1\leq t\leq n }}(u_{st}+q_{st})\right.\\
&\quad \left.+ \sum\limits_{\substack{1\leq i\leq n_1 \\ n_2+1\leq t\leq n}}h_{it}+\sum\limits_{1\leq i<r\leq n_1}f_{ir}+\sum\limits_{n_2+1\leq p<t\leq n_1}g_{pt}=k \right\},
\end{align*}
 then
$$d_k=\dim Span_{\mathbb{R}}N'_{k}\approx ck^{2n-2},$$ for some constant $c$.
\end{enumerate}

\end{Prop}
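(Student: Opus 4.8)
The plan is to prove all four parts by the two-sided estimate already used for Propositions~\ref{ak} and~\ref{n=n}: bound $\dim Span_{\mathbb{R}}$ of the set in question from below by counting distinct leading monomials of its elements, and from above by placing the span inside a small explicit space of polynomials; the two bounds should agree, giving $d_k\approx c\,k^{2n-2}$ (resp.\ $c\,k^{2n-3}$). The four statements are degenerations of one another: (4) is the generic configuration, (1) its boundary $n_2=n-1$, (2) its boundary $n_1=1$ (where the binomials $x_iy_r-x_ry_i$, $i<r\le n_1$, disappear), and (3) its boundary $n_2=n$ (where the families $(x_sy_t)^{u}$, $(y_sy_t)^{q}$ and the binomials $(x_py_t-x_ty_p)^{g}$ all disappear, leaving only the binomials $x_iy_r-x_ry_i$ with $i<r\le n_1$). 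So it suffices to carry out (4) in full, record the cosmetic changes giving (1) and (2), and treat (3) separately, since there the subcase $n_1=2$ is genuinely different.

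For the lower bounds I would fix the dictionary order of the paragraph preceding Proposition~\ref{n=n} (interchanging, if needed, $x_1$ with another coordinate); the leading monomial of each product in the set is then determined by replacing every binomial $f_j-g_j$ by the larger of $f_j,g_j$, and such a leading monomial occurs with nonzero coefficient, so the distinct ones among the elements of the set are linearly independent. The combinatorial core is to show, via the multinomial theorem and the Bernoulli power-sum lemma, that these leading monomials carry $2n-2$ free parameters---roughly the $2(n-n_1)$ exponents of $x_s,y_s$ ($s$ in the large block, arising as the partial sums $\sum_i p_{is}$, $\sum_i l_{is}$) together with $2n_1-2$ further exponents attached to the indices $1,\dots,n_1$; this yields $d_k\gtrsim c\,k^{2n-2}$. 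In case (3) with $n_1=2$ the only binomial is $x_1y_2-x_2y_1$, so one of $y_1,y_2$ is absent from every leading monomial and one parameter is lost, giving only $d_k\gtrsim c\,k^{2n-3}$.

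For the upper bounds in (1), (2), (4) and in (3) with $n_1\ge 3$, I would split the $2n$ coordinates into two blocks (put $x_i$ and $y_i$ in opposite blocks for each $i$, and the two coordinates of every $x$-$x$ or $y$-$y$ factor in opposite blocks; one checks this is consistent for the index ranges occurring) so that every generating factor, monomial or binomial, is a product of one coordinate from each block. Since each element is a product of $k$ such factors, every monomial in it has degree $k$ in each block, so $Span_{\mathbb{R}}$ of the set lies in the space of polynomials of that bidegree, which has dimension $\sim c\,k^{2n-2}$, matching the lower bound. In case (3) with $n_1=2$ this is too crude: instead every element of $Z_k$ is $m\cdot(x_1y_2-x_2y_1)^{f}$ with $m$ a monomial in $x_1,x_2$ and in the coordinates $x_s,y_s$ $(s\ge 3)$, and since $\deg_{y_1}+\deg_{y_2}=f$ defines a $\mathbb Z$-grading one has
\[ Span_{\mathbb{R}}Z_k=\bigoplus_{f=0}^{k}(x_1y_2-x_2y_1)^{f}\cdot L_f, \]
with $L_f$ the span of the admissible degree-$(k-f)$ monomials $m$; a Proposition~\ref{ak}-type count gives $\dim L_f\sim c'(k-f)^{2n-4}$, and summing over $f$ with the Bernoulli power-sum lemma yields $\dim Span_{\mathbb{R}}Z_k\sim c''\,k^{2n-3}$, again matching the lower bound.

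The main difficulty will be pinning down the exact power of $k$: on the upper side, the drop from $k^{2n-2}$ to $k^{2n-3}$ in the $n_1=2$ subcase of (3), invisible to the crude bidegree estimate and forced only by the hidden $\mathbb Z$-grading coming from the single binomial; and on the lower side, arranging the choices so that the leading monomials genuinely spread over $2n-2$ (resp.\ $2n-3$) independent coordinates, which uses the fact that several of the binomial families are products of $2\times 2$ minors. The remainder is bookkeeping---selecting leading monomials coherently across the many binomial and monomial families so that distinct exponent vectors do not collapse beyond the unavoidable partial-sum identifications, checking consistency of the block splitting in each index regime, and verifying the bidegree conditions for every monomial in every expansion, not merely for the leading terms.
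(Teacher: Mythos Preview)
Your plan is correct and is essentially the paper's own approach: the paper also reduces to proving (3) and (4) (declaring (1) and (2) dual/similar), obtains the lower bounds by extracting leading monomials in the dictionary order and the upper bounds by embedding into a bidegree-$(k,k)$ space for a two-block partition of the variables, and handles the $n_1=2$ subcase of (3) by factoring each element as a monomial times $(x_1y_2-x_2y_1)^{f_{12}}$ exactly as you do via the $\deg_{y_1}+\deg_{y_2}$ grading. One small caution: your recipe ``put $x_i$ and $y_i$ in opposite blocks for each $i$'' is not literally what works---in the paper's splitting for (4) the middle variables $x_s,y_s$ with $n_1<s\le n_2$ sit in the \emph{same} block (namely $\{x_s,y_s,x_t,y_i\}$ against $\{x_i,y_t\}$); the consistency check you flag will reveal this, and the counting still gives $k^{2n-2}$.
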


\begin{proof}The statements $(1)$ and  $(2)$ are dual to each other. Their proofs are similar to the proof of $(4)$. So we only need to prove $(3)$ and $(4)$.

\text{Proof of $(3)$:}
When $n_1=2$, we have

\begin{align*}
d_{k}&=\dim Span_{\mathbb{R}}\left\{\prod\limits(x_ix_s)^{p_{is}}\prod\limits(x_{i}y_{s})^{l_{is}}({x_{1} y_{2}}-x_{2}y_{1})^{f_{12}}|\right.\\
  &\quad\quad \quad \quad \quad \quad \left.\sum\limits_{\substack{1\leq i\leq 2  \\n_1+1\leq s\leq n}}p_{is}+\sum\limits_{\substack{1\leq i\leq 2  \\3\leq s\leq n}}l_{is}+ f_{12}=k \right\}\\
=&\dim Span_{\mathbb{R}}\left\{\prod\limits_{1\leq i\leq 2}(x_i)^{\sum\limits_{3\leq s\leq n}p_{is}+l_{is}}\cdot\prod\limits_{3\leq s\leq n}(x_s)^{\sum\limits_{1\leq i\leq 2}p_{is}}\prod\limits_{3\leq s\leq n}(y_{s})^{\sum\limits_{1\leq i\leq 2}l_{is}}\right.\\
&\quad\quad \quad \quad \quad \quad \left.\cdot
\prod\limits({x_{1} y_{2}}-x_{2}y_{1})^{f_{12}}
|\sum\limits_{\substack{1\leq i\leq 2  \\3\leq s\leq n}}p_{is}+\sum\limits_{\substack{1\leq i\leq 2  \\3\leq s\leq n}}l_{is}+ f_{12}=k \right\}\\
&\approx \alpha_{0}k^{2n-3}, \emph{~for some constant }\alpha_{0}.
\end{align*}

When $n_1>2$, we have

\begin{align*}
d_{k}&=\dim Span_{\mathbb{R}}\left\{\prod\limits(x_ix_s)^{p_{is}}\prod\limits(x_{i}y_{s})^{l_{is}}\prod\limits({x_{i} y_{r}}-x_{r}y_{i})^{f_{ir}}|\right.\\
&\quad\quad \left.\sum\limits_{\substack{1\leq i\leq n_1  \\n_1+1\leq s\leq n}}p_{is}+\sum\limits_{\substack{1\leq i\leq n_1  \\n_1+1\leq s\leq n}}l_{is}+ \sum\limits_{\substack{1\leq i<r\leq n_1  }}f_{ir}=k \right\}\\
\geq &\dim Span_{\mathbb{R}}\left\{\prod\limits_{1\leq i\leq n_1}(x_i)^{\sum\limits_{n_1+1\leq s\leq n}p_{is}+l_{is}}\cdot\prod\limits_{n_1+1\leq s\leq n}(x_s)^{\sum\limits_{1\leq i\leq n_1}p_{is}}\prod\limits_{n_1+1\leq s\leq n}(y_{s})^{\sum\limits_{1\leq i\leq n_1}l_{is}}\right.\\
& \quad \quad \quad \left.\cdot
\prod\limits_{2\leq i<r\leq n_1}(x_{r}y_{i})^{f_{ir}}\cdot (x_1y_{n_1})^{f_{1n_1}}\cdot\prod\limits_{2\leq r<n_1}(x_ry_1)^{f_{1r}}\right.\\
&\left.\quad\quad |\sum\limits_{\substack{1\leq i\leq n_1  \\n_1+1\leq s\leq n}}p_{is}+\sum\limits_{\substack{1\leq i\leq n_1  \\n_1+1\leq s\leq n}}l_{is}+ \sum\limits_{\substack{1\leq i<r\leq n_1  }}f_{ir}=k \right\}\\
&=\dim Span_{\mathbb{R}}\left\{\prod\limits_{1\leq i\leq n_1}(x_i)^{\sum\limits_{n_1+1\leq s\leq n}p_{is}+l_{is}}\cdot \prod\limits_{2<r\leq n_1}(x_{r})^{\sum\limits_{i<r}f_{ir}}\cdot (x_1)^{f_{1n_1}}\cdot\prod\limits_{2\leq r<n_1}(x_r)^{f_{1r}}
\right.\\
& \quad \quad \quad\quad\quad \left.\cdot\prod\limits_{n_1+1\leq s\leq n}(x_s)^{\sum\limits_{1\leq i\leq n_1}p_{is}}\prod\limits_{n_1+1\leq s\leq n}(y_{s})^{\sum\limits_{1\leq i\leq n_1}l_{is}}\cdot
\prod\limits_{2\leq i< n_1}(y_{i})^{\sum\limits_{i<r\leq n_1}f_{ir}}\right.\\
&\quad\quad\quad\quad\quad \left.\cdot\prod\limits_{2\leq r<n_1}(y_1)^{f_{1r}}\cdot (y_{n_1})^{f_{1n_1}}
|\sum\limits_{\substack{1\leq i\leq 2  \\3\leq s\leq n}}p_{is}+\sum\limits_{\substack{1\leq i\leq 2  \\3\leq s\leq n}}l_{is}+ f_{12}=k \right\}\\
&\approx \alpha_{10}k^{2n-2}, \emph{~for some constant }\alpha_{10}.
\end{align*}

On the other hand, we have
\begin{align}\nonumber
d_k=&\dim Span_{\mathbb{R}}Z_{k}\\\nonumber
\leq &\dim Span_{\mathbb{R}}\left\{\prod\limits ({x_i })^{p_{i}}\cdot\prod(x_s)^{a_s}\prod(y_s)^{b_s}\prod(y_{i})^{f_{i}} \right.\\\nonumber
&\quad\quad\quad\quad\quad\left. |\sum p_{i}=\sum f_{i}+\sum a_s+\sum b_s=k \right\}\\
\approx & \alpha_{11} k^{2n-2},  \emph{~for some constant }\alpha_{11}.\nonumber
\end{align}\nonumber

So we must have $d_k=\dim Span_{\mathbb{R}}Z_{k}\approx \alpha_{1}k^{2n-2},$ for some positive constant $\alpha_{1}$.

\text{Proof of $(4)$:}

When $n_1=2<n_2<n-1$,  we have
\begin{align*}&d_{k}=\dim Span_{\mathbb{R}}\left\{\prod\limits(x_ix_s)^{p_{is}}\prod\limits(y_{s}y_{t})^{q_{st}}\prod\limits(x_iy_s)^{l_{is}}
\prod\limits(x_sy_t)^{u_{st}}\right.\\
& \quad \quad\quad \quad \quad \quad\quad  \left.\prod\limits({x_{i} x_{t}}-y_{i}y_{t})^{h_{it}}\prod\limits({x_{i} y_{r}}-x_{r}y_{i})^{f_{ir}}\prod\limits({x_{p} y_{t}}-x_{t}y_{p})^{g_{pt}}| \right.\\
&\left.\quad\quad \quad \quad \quad \quad \quad\sum\limits p_{is}+\sum\limits q_{st}+ \sum\limits l_{is}+\sum\limits u_{st}+\sum\limits h_{it}+\sum\limits f_{ir}+\sum\limits g_{pt}=k \right\}\\
&\geq \dim Span_{\mathbb{R}}\left\{\prod\limits(x_ix_s)^{p_{is}}\prod\limits(y_{s}y_{t})^{q_{st}}\prod\limits(x_iy_s)^{l_{is}}
\prod\limits(x_sy_t)^{u_{st}}\right.\\
& \quad \quad\quad \quad \quad \quad\quad  \left.\prod\limits_{n_2+2\leq t\leq n}({x_{1} x_{t}})^{h_{1t}}\cdot(y_1y_{n_2+1})^{h_{1,n_2+1}}\cdot(x_2x_{n_2+1})^{h_{2,n_2+1}}\cdot\prod\limits_{n_2+2\leq t\leq n} (y_2y_t)^{h_{2t}}\right.\\
&\left.\quad\quad \quad \quad \quad \quad \quad
\prod\limits({x_{1} y_{2}}-x_{2}y_{1})^{f_{12}}\prod\limits(x_{t}y_{n_2+1})^{g_{n_2+1,t}}| \right.\\
&\left.\quad\quad \quad \quad \quad \quad \quad\sum\limits p_{is}+\sum\limits q_{st}+ \sum\limits l_{is}+\sum\limits u_{st}+\sum\limits h_{it}+\sum\limits f_{ir}+\sum\limits g_{pt}=k \right\}\\
&\geq \dim Span_{\mathbb{R}} \left\{\left(({x_2})^{h_{2,n_2+1}+\sum (p_{2s}+l_{2s})}(x_1)^{\sum (p_{1s}+l_{1s})+f_{12}}(\prod\limits_{n_2+2\leq t\leq n}(x_1)^{ h_{1t}})\right.\right.\\
&\quad \quad\quad\quad\quad\quad\left.\left.\prod\limits_{n_2+2\leq t\leq n} (y_t)^{h_{2t}} (y_{t})^{\sum (q_{st}+u_{st})}({y_{n_2+1}})^{h_{1,n_2+1}+\sum g_{n_2+1,t}}\right)\right.   \\
& \quad \quad\quad\quad\quad\quad \left. \cdot \left( \prod\limits(x_s)^{p_{1s}+p_{2s}+\sum u_{st}}(x_{n_2+1})^{h_{2,n_2+1}}(\prod\limits_{n_2+2\leq t\leq n}(x_t)^{h_{1t}+g_{n_2+1,t}})\right.\right.\\
&\quad \quad\quad\quad\quad\quad\left.\left.(\prod\limits(y_{s})^{\sum q_{st}+\sum l_{is}})({y_{1}})^{h_{1,n_2+1}}\prod\limits_{n_2+2\leq t\leq n}(y_2)^{ h_{2t}}y_{2}^{f_{12}}\right)\right.\\
&\quad \quad\quad\quad\quad\quad \left. | \sum\limits p_{is}+\sum\limits q_{st}+ \sum\limits l_{is}+\sum\limits u_{st}+\sum\limits h_{it}+\sum\limits f_{ir}+\sum\limits g_{pt}=k \right\}\\
&\approx c_0k^{2n-2}, \emph{~for some constant }c_0.
\end{align*}

On the other hand, we have
\begin{align}\label{d'k<}
d_k=&\dim Span_{\mathbb{R}}N'_{k}\\
\leq &\dim Span_{\mathbb{R}}\left\{\prod\limits ({x_i })^{p_{i}}\prod({ y_{t}})^{q_{t}}\cdot\prod(x_s)^{a_s}\prod(y_s)^{b_s}\prod({ x_{t}})^{l_{t}}\prod(y_{i})^{f_{i}} |\right.\\
&\quad\quad\quad\quad\quad\left. \sum p_{i}+\sum q_{t}=\sum l_{t}+\sum f_{i}+\sum a_s+\sum b_s=k \right\}\\
\approx & c_{00} k^{2n-2},  \emph{~for some constant }c_{00}.
\end{align}

So we must have $d_k=\dim Span_{\mathbb{R}}N'_{k}\approx ck^{2n-2},$ for some positive constant $c$.

When $n_1>2$, we have a similar argument.  And for these cases we still have $d_k=\dim Span_{\mathbb{R}}N'_{k}\approx ck^{2n-2},$ for some positive constant $c$.

\end{proof}

Next, we will compute the Gelfand-Kirillov dimensions of our modules in a case-by-case way.

Luo and Xu \cite{Luo-Xu-lie} proved that
 for any
$n_1-n_2+1-\delta_{n_1,n_2}\geq k'\in\mathbb{Z}$,
${\mathcal{H}}_{\langle k' \rangle}$ is an irreducible
$\mathfrak{o}(2n,\bb{C})$-module. Moreover, the
homogeneous subspace $\mathcal{
B}_{\langle k'\rangle}=\bigoplus_{i=0}^\infty\eta^i(\mathcal{
H}_{\langle k'-2i\rangle})$ is a direct sum of
irreducible submodules. The module ${\mathcal{H}}_{\langle k' \rangle}$ under the assumption is of highest-weight type only if $n_2=n$, in which case $x_{n_1}^{-k'}$ is a highest-weight
vector with weight
$-k'\lmd_{n_1-1}+(k'-1)\lmd_{n_1}+[(k'-1)\dlt_{n_1,n-1}-2k'\dlt_{n_1,n}]\lmd_n$.

\subsection{Case 1. $n_1+1\leq n_2$ \text{and} $n_{1}-n_{2}+1\geq k' \in \mathbb{Z}$.}

\hspace{1cm}

In this case we have:
\begin{align} \label{ri}(E_{r,i}-E_{n+i,n+r})|_{\mathcal{B}}&=-x_i\ptl_{x_r}-y_i\ptl_{y_r} &\text{for~}& 1\leq i<r\leq
n_1,\\
 \label{si}(E_{s,i}-E_{n+i,n+s})|_{\mathcal{B}}&=-{x_i x_{s}}-{y_i}\ptl_{y_{s}}  &\text{for} ~&i\in\overline{1,n_1},\;s\in\overline{n_1+1,n_2},
\\
\label{ti}(E_{t,i}-E_{n+i,n+t})|_{\mathcal{B}}&=-{x_i x_{t}}+y_{i}y_{t}   &\text{for} ~&i\in\overline{1,n_1},\;t\in\overline{n_2+1,n},\\
 \label{sj}(E_{s,j}-E_{n+j,n+s})|_{\mathcal{B}}&=x_s\ptl_{x_j}-y_j\ptl_{y_s}   &\text{for}~& n_1<
j<s\leq n_2,\\
 \label{ts}(E_{t,s}-E_{n+s,n+t})|_{\mathcal{B}}&=x_{t}\ptl_{x_{s}}+y_{s} y_{t}  &\text{for} ~&s\in\overline{n_1+1,n_2},\;t\in\overline{n_{2}+1,n},\\
 \label{tp}(E_{t,p}-E_{n+p,n+t})|_{\mathcal{B}}&=x_t\ptl_{x_p}+y_t\ptl_{y_p}&\text{for}~& n_2+1\leq p<t\leq
n,\\
\label{inr}(E_{i,n+r}-E_{r,n+i})|_{\mathcal{B}}&=\ptl_{x_i}\ptl_{y_r}-\ptl_{x_r}\ptl_{y_i}  &\text{for~}& 1\leq i<r\leq
n_1,\\
\label{nri}(E_{n+r,i}-E_{n+i,r})|_{\mathcal{B}}&=-{x_i}{y_r}+{x_r}{y_i}  &\text{for~}& 1\leq i<r\leq
n_1,\\
\label{ins}(E_{i,n+s}-E_{s,n+i})|_{\mathcal{B}}&=\ptl_{x_i}\ptl_{y_s}-{x_s}\ptl_{y_i}  &\text{for} ~&i\in\overline{1,n_1},\;s\in\overline{n_1+1,n_2},
\\
\label{nsi}(E_{n+s,i}-E_{n+i,s})|_{\mathcal{B}}&=-{x_i}{y_s}-{y_i}\ptl_{x_s}  &\text{for} ~&i\in\overline{1,n_1},\;s\in\overline{n_1+1,n_2},
\\
\label{int}(E_{i,n+t}-E_{t,n+i})|_{\mathcal{B}}&=-y_t\ptl_{x_i}-x_t\ptl_{y_i}   &\text{for} ~&i\in\overline{1,n_1},\;t\in\overline{n_2+1,n},\\
\label{nti}(E_{n+t,i}-E_{n+i,t})|_{\mathcal{B}}&=-x_i\ptl_{y_t}-y_i\ptl_{x_t}   &\text{for} ~&i\in\overline{1,n_1},\;t\in\overline{n_2+1,n},\\
\label{jns}(E_{j,n+s}-E_{s,n+j})|_{\mathcal{B}}&=x_j\ptl_{y_s}-x_s\ptl_{y_j}   &\text{for}~& n_1<j<s\leq n_2,\\
\label{njs}(E_{n+j,s}-E_{n+s,j})|_{\mathcal{B}}&=-y_s\ptl_{x_j}+y_j\ptl_{x_s}   &\text{for}~& n_1<j<s\leq n_2,\\
\label{snt}(E_{s,n+t}-E_{t,n+s})|_{\mathcal{B}}&=-x_s{y_t}-x_t\ptl_{y_s}        &\text{for} ~&s\in\overline{n_1+1,n_2},\;t\in\overline{n_{2}+1,n},\\
\label{nst}(E_{n+s,t}-E_{n+t,s})|_{\mathcal{B}}&=-\ptl_{x_s}\ptl_{y_t}-y_s\ptl_{x_t}    &\text{for} ~&s\in\overline{n_1+1,n_2},\;t\in\overline{n_{2}+1,n},\\
\label{pnt}(E_{p,n+t}-E_{t,n+p})|_{\mathcal{B}}&=-x_p{y_t}+x_t{y_p}       &\text{for}~& n_2+1\leq p<t\leq n,\\
\label{npt}(E_{n+p,t}-E_{n+t,p})|_{\mathcal{B}}&=-\ptl_{x_p}\ptl_{y_t}+\ptl_{x_t}\ptl_{y_p}    &\text{for}~& n_2+1\leq p<t\leq n.
\end{align}
Then the above root elements form a basis for the subalgebra $\mathfrak{g}(\mathcal{P}_{+})_{-}:=\mathfrak{o}(2n,\mbb{C})_-+\mathcal{P}_{+}$.

From Luo-Xu \cite{Luo-Xu-lie} we know that the $\mathcal{K}$-singular vectors in ${\mathcal H}_{\langle k'\rangle}$
are:
\eq x_{n_1}^{m_1}y_{n_2+1}^{m_2}\qquad\mbox{with}\;-(m_1+m_2)=k',\en
\eq x_{n_1+1}^{m_1}y_{n_2+1}^{m_2}\qquad\mbox{with}\;m_1-m_2=k',\en
\eq\label{n} x_{n_1}^{m_1}y_{n_2}^{m_2}\qquad\mbox{with}\;-m_1+m_2=k',\en
for all possible  $m_1,m_2\in\mathbb{N}$. When $n_1+1=n_2=n$, the $\mathcal{K}$-singular vectors in ${\mathcal H}_{\langle k'\rangle}$ are those in $(\ref{n})$.

Let $\mathfrak{g}_1$ be the subalgebra of $\mathfrak{o}(2n,\bb{C})$ spanned by the root vectors in the following set:$$I_1:=\{(\ref{ri}),(\ref{sj}),(\ref{tp}),(\ref{inr}),(\ref{ins}),(\ref{int}),(\ref{nti}),(\ref{jns}),(\ref{njs}),(\ref{nst}),(\ref{npt})\}.$$

Let $\mathfrak{g}_2$ be the subalgebra of $\mathfrak{o}(2n,\bb{C})$ spanned by the root vectors in the following set:$$I_2:=\{(\ref{si}),(\ref{ti}),(\ref{ts}),(\ref{nri}),(\ref{nsi}),(\ref{snt}),(\ref{pnt})\}.$$

So we get $U(\mathfrak{g}(\mathcal{P}_{+})_{-})=U(\mathfrak{g}_{2})U(\mathfrak{g}_{1}).$  From the construction of the root vectors we have the following lemma.

\begin{Lem}Every root vector in $\mathfrak{g}_1$ acts locally nilpotently on $\mathcal{H}_{\langle k' \rangle}$ and every root vector in $\mathfrak{g}_2$ acts torsion-freely (injectively) on $\mathcal{H}_{\langle k' \rangle}$.

\end{Lem}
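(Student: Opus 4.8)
The plan is to prove both halves on the whole polynomial algebra $\mathcal{B}=\mathbb{C}[x_1,\dots,x_n,y_1,\dots,y_n]$ and then restrict, since local nilpotency and injectivity of an operator on $\mathcal{B}$ are inherited by every $\mathfrak{o}(2n,\mathbb{C})$-submodule, in particular by $\mathcal{H}_{\langle k'\rangle}$. Everything rests on two elementary features of the explicit formulas (\ref{ri})--(\ref{npt}), read off by inspection. First, every basis vector of $\mathfrak{g}_1$ is non-increasing for the total-degree grading $\mathcal{B}=\bigoplus_q P_q$: the operators (\ref{ri}), (\ref{sj}), (\ref{tp}), (\ref{int}), (\ref{nti}), (\ref{jns}), (\ref{njs}) preserve total degree, (\ref{inr}) and (\ref{npt}) lower it by $2$, and (\ref{ins}), (\ref{nst}) are a sum of a degree-preserving part and a part lowering degree by $2$. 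Second, every basis vector of $\mathfrak{g}_2$ is non-decreasing for this grading, and the component realizing its maximal degree shift $+2$ is multiplication by a fixed nonzero polynomial: $-x_ix_s$ for (\ref{si}), $y_iy_t-x_ix_t$ for (\ref{ti}), $y_sy_t$ for (\ref{ts}), $x_ry_i-x_iy_r$ for (\ref{nri}), $-x_iy_s$ for (\ref{nsi}), $-x_sy_t$ for (\ref{snt}), $x_ty_p-x_py_t$ for (\ref{pnt}).

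For $\mathfrak{g}_1$, fix such an $X$ and a vector $v\in\mathcal{B}$ of total degree $\leq d$; by the first observation all iterates $X^m v$ lie in the finite-dimensional subspace $\bigoplus_{q\leq d}P_q$. The idea is to attach to each monomial $x^\alpha y^\beta$ a nonnegative integer $\phi_X$, namely a nonnegative-integer combination of the exponents $\alpha_i,\beta_i$ (for the two total-degree-inhomogeneous cases also involving $\deg=|\alpha|+|\beta|$), so that $X$ sends any monomial to a linear combination of monomials on which $\phi_X$ has strictly dropped. For example one can take $\phi_X=\deg$ for (\ref{inr}) and (\ref{npt}), $\phi_X=\alpha_r+\beta_r$ for (\ref{ri}), $\phi_X=\alpha_j+\beta_s$ for (\ref{sj}), $\phi_X=\alpha_p+\beta_p$ for (\ref{tp}), $\phi_X=\deg+\beta_i$ for (\ref{ins}), $\phi_X=\deg+\alpha_t$ for (\ref{nst}), and analogous monomial weights for (\ref{int}), (\ref{nti}), (\ref{jns}), (\ref{njs}). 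Since each such $\phi_X$ is $\geq 0$ and is bounded on $\bigoplus_{q\leq d}P_q$, a vector supported on monomials with $\phi_X\leq M$ is annihilated by $X^{M+1}$, which is precisely local nilpotency.

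For $\mathfrak{g}_2$, fix such an $X$ and a nonzero $v\in\mathcal{B}$ with top homogeneous component $v_d\neq 0$. Writing $X=p_X\cdot(\,\cdot\,)+X'$ with $p_X$ the nonzero polynomial listed above and $X'$ shifting total degree by strictly less than $2$, the degree-$(d+2)$ component of $Xv$ equals $p_X v_d$, which is nonzero because $p_X\neq 0$ and $\mathcal{B}$ is an integral domain. Hence $Xv\neq 0$, so $X$ is injective on $\mathcal{B}$ and a fortiori torsion-free on $\mathcal{H}_{\langle k'\rangle}$.

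I expect no genuine obstacle here: both halves amount to a direct inspection of (\ref{ri})--(\ref{pnt}) together with the fact that $\mathcal{B}$ is a polynomial ring. The one point that deserves care is the choice of $\phi_X$ for the two operators (\ref{ins}) and (\ref{nst}) that are inhomogeneous for the total-degree grading: there one must check that the degree-preserving term, $-x_s\partial_{y_i}$ (resp. $-y_s\partial_{x_t}$), lowers $\beta_i$ (resp. $\alpha_t$) by one while the other term lowers the total degree by two, so that $\phi_X=\deg+\beta_i$ (resp. $\deg+\alpha_t$) strictly decreases on both terms. Everything else follows the homogeneous pattern above.
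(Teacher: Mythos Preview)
Your proposal is correct and matches the paper's approach: the paper states the lemma with no proof beyond the sentence ``From the construction of the root vectors we have the following lemma,'' i.e.\ it treats the result as immediate from inspection of the explicit formulas (\ref{ri})--(\ref{npt}). Your argument is a careful, fully written-out version of exactly that inspection --- the total-degree filtration together with a strictly decreasing monomial weight $\phi_X$ for each $X\in\mathfrak{g}_1$, and the leading-term-times-integral-domain observation for each $X\in\mathfrak{g}_2$ --- so there is nothing to add or correct.
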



We take a $\mathcal K$-singular vector $v_{\mathcal{K}}=x_{n_1}^{-k'}$, and set $M_{0}=U(\mathfrak{g}_1)x_{n_1}^{-k'}$.
  Then $M_{0}$  is finite-dimensional from the above lemma.

Thus
$${\mathcal{H}}_{\langle k' \rangle}=U(\mathfrak{g})v_{\mathcal{K}}=U(\mathfrak{g_{-}+\mathcal{P}_{+}})v_{\mathcal{K}}=U(\mathfrak{g}_{2})M_0.$$

Let $k$ be any positive integer. We want to compute $\mathrm{dim}(U_{k}(\mathfrak{g}_{2})M_{0})$, and then get the Gelfand-Kirillov dimension of $U(\mathfrak{g}_{2})M_{0}$.

 The cardinality of a set $A$ is usually denoted by $|A|$.

We use $E_{p}$ to stand for the root vector in the equation $(p)$. And denote
\begin{align*}N_{0}(k)=&\left\{\left(\prod E_{\ref{si}}^{p_{is}}\prod E_{\ref{ti}}^{h_{it}}\prod E_{\ref{ts}}^{q_{st}}\prod E_{\ref{nri}}^{f_{ir}}\prod E_{\ref{nsi}}^{l_{is}}\prod E_{\ref{snt}}^{u_{st}}\prod E_{\ref{pnt}}^{g_{pt}}\right)v_{\mathcal{K}}\right.\\
& \left.| \sum  p_{is}+\sum h_{it}+\sum q_{st}+\sum f_{ir}+\sum l_{is}+\sum u_{st}+\sum g_{pt}=k,\right.\\
 &\left.p_{is}, h_{it}, q_{st}, f_{ir},l_{is}, u_{st}, g_{pt}\in \mathbb{N} \right\}.\end{align*}

From the definition we know
\begin{align*}&\left(\prod E_{\ref{si}}^{p_{is}}\prod E_{\ref{ti}}^{h_{it}}\prod E_{\ref{ts}}^{q_{st}}\prod E_{\ref{nri}}^{f_{ir}}\prod E_{\ref{nsi}}^{l_{is}}\prod E_{\ref{snt}}^{u_{st}}\prod E_{\ref{pnt}}^{g_{pt}}\right)v_{\mathcal{K}}\\
=&\left(\prod (-{x_i x_{s}}-{y_i}\ptl_{y_{s}})^{p_{is}}\prod(-{x_{i} x_{t}}+y_{i}y_{t})^{h_{it}} \prod(y_{s}y_{t})^{q_{st}}\prod(-x_iy_r+x_ry_i)^{f_{ir}}\right.\\
&\left.\cdot\prod(-x_iy_s-y_i\partial_{x_s})^{l_{is}}\prod(-x_sy_t-x_t\partial_{y_s})^{u_{st}} \right)\cdot x_{n_1}^{-k'}\\
=&\left(\prod (-{x_i x_{s}})^{p_{is}}\prod(-{x_{i} x_{t}}+y_{i}y_{t})^{h_{it}} \prod(y_{s}y_{t})^{q_{st}}\prod(-x_iy_r+x_ry_i)^{f_{ir}}\right.\\
&\left.\cdot\prod(-x_iy_s)^{l_{is}}\prod(-x_sy_t)^{u_{st}} \right)\cdot x_{n_1}^{-k'}\\
&+\text{lower degree polynomials of }~y_{s} \text{~and~} x_s.
\end{align*}

Now we suppose $1<n_1<n_2<n-1$.  Then we must have $$\dim Span_{\mathbb{R}}N_{0}(m)\geq d_m=\dim Span_{\mathbb{R}}N'_{m}.$$

Using the same idea with the proof of Proposition \ref{n<m} (4), we can also get $$\dim Span_{\mathbb{R}}N_{0}(m)\leq d_{m}=\dim Span_{\mathbb{R}}N'_{m}.$$

Thus $\dim Span_{\mathbb{R}}N_{0}(m)= d_{m}=\dim Span_{\mathbb{R}}N'_{m}.$

Then using Proposition \ref{n<m} (4), we can get
\begin{align*}
&\dim Span_{\mathbb{R}}(\bigcup\limits_{0\leq m \leq k}N_{0}(m))\\
=& \sum\limits_{0\leq m \leq k} \dim Span_{\mathbb{R}}N'_{m}\\
=&c\sum\limits_{0\leq m\leq k}m^{2n-2}\\
=&c'k^{2n-1}
\end{align*}

Also we  have
$$
\dim Span_{\mathbb{R}}(\bigcup\limits_{0\leq m\leq k }N_{0}(m))\leq \mathrm{dim}(U_{k}(\mathfrak{g}_{2})M_{0})\leq \dim{M_0}\dim Span_{\mathbb{R}}(\bigcup\limits_{0\leq m\leq k }N_{0}(m)).
$$

Then from the definition, we know that the Gelfand-Kirillov dimension of $U(\mathfrak{g}_{2})M_{0}$ is
$$d=2n-1,  \text{if $1<n_1<n_2<n-1$.}$$

When $n_1=1<n_2<n-1$ or $1<n_1<n_2=n-1$, through a similar argument and using Proposition \ref{n<m} (1) and  \ref{n<m}(2),
we find that  the Gelfand-Kirillov dimension of $U(\mathfrak{g}_{2})M_{0}$ is $$d=2n-1.$$

When $1=n_1<n_2=n-1$ or $1=n_1<n_2=n$, through a similar argument and using Proposition \ref{ak},
we find that  the Gelfand-Kirillov dimension of $U(\mathfrak{g}_{2})M_{0}$ is $$d=2n-2.$$

When $1<n_1<n_2=n$, through a similar argument and using Proposition \ref{n<m} (3),
we find that  the Gelfand-Kirillov dimension of $U(\mathfrak{g}_{2})M_{0}$ is
$$d=\left\{
      \begin{array}{ll}
          2n-2, & \hbox{if $n_1=2<n_2=n$;}\\
          2n-1, & \hbox{if $3\leq n_1<n_2=n$.}
      \end{array}
    \right.
$$

Therefore, for this case $n_1+1\leq n_2$, the Gelfand-Kirillov dimension of $U(\mathfrak{g}_{2})M_{0}$ is
\begin{align*}d=\left\{
                   \begin{array}{ll}
                     2n-2, & \hbox{if $n_1=1, n_2=n-1$ ~or~ $n_1=1,2,n_2=n$;}\\
                     2n-1, & \hbox{if $n_1=1<n_2<n-1$ ~or~ $1<n_1<n_2\leq n-1$ ~or~ $3\leq n_1<n_2=n$.}
                   \end{array}
                 \right.
\end{align*}
\hspace{1cm}
\subsection{Case 2. $n_1=n_2$ \emph{and} $0\geq k'\in \mathbb{Z}$.}

\hspace{1cm}

In this case we have:
\begin{align} \label{ri-}(E_{r,i}-E_{n+i,n+r})|_{\mathcal{B}}&=-x_i\ptl_{x_r}-y_i\ptl_{y_r} &\text{for~}& 1\leq i<r\leq
n_1,\\
\label{ti-}(E_{t,i}-E_{n+i,n+t})|_{\mathcal{B}}&=-{x_i x_{t}}+y_{i}y_{t}   &\text{for} ~&i\in\overline{1,n_1},\;t\in\overline{n_1+1,n},\\
 \label{tp-}(E_{t,p}-E_{n+p,n+t})|_{\mathcal{B}}&=x_t\ptl_{x_p}+y_t\ptl_{y_p}&\text{for}~& n_1+1\leq p<t\leq
n,\\
\label{inr-}(E_{i,n+r}-E_{r,n+i})|_{\mathcal{B}}&=\ptl_{x_i}\ptl_{y_r}-\ptl_{x_r}\ptl_{y_i}  &\text{for~}& 1\leq i<r\leq
n_1,\\
\label{nri-}(E_{n+r,i}-E_{n+i,r})|_{\mathcal{B}}&=-{x_i}{y_r}+{x_r}{y_i}  &\text{for~}& 1\leq i<r\leq
n_1,\\
\label{int-}(E_{i,n+t}-E_{t,n+i})|_{\mathcal{B}}&=-y_t\ptl_{x_i}-x_t\ptl_{y_i}   &\text{for} ~&i\in\overline{1,n_1},\;t\in\overline{n_1+1,n},\\
\label{nti-}(E_{n+t,i}-E_{n+i,t})|_{\mathcal{B}}&=-x_i\ptl_{y_t}-y_i\ptl_{x_t}   &\text{for} ~&i\in\overline{1,n_1},\;t\in\overline{n_1+1,n},\\
\label{pnt-}(E_{p,n+t}-E_{t,n+p})|_{\mathcal{B}}&=-x_p{y_t}+x_t{y_p}       &\text{for}~& n_1+1\leq p<t\leq n,\\
\label{npt-}(E_{n+p,t}-E_{n+t,p})|_{\mathcal{B}}&=-\ptl_{x_p}\ptl_{y_t}+\ptl_{x_t}\ptl_{y_p}    &\text{for}~& n_1+1\leq p<t\leq n.
\end{align}
Then the above root elements form a basis for the subalgebra $\mathfrak{g}(\mathcal{P}_{+})_{-}:=\mathfrak{o}(2n,\mbb{C})_-+\mathcal{P}_{+}$.

Suppose $n_1=n_2<n-1$. From Luo-Xu \cite{Luo-Xu-lie} we know that the $\mathcal{K}$-singular vectors in ${\mathcal H}_{\langle k'\rangle}$
are:
\eq x_{n_1}^{m_1}y_{n_2+1}^{m_2}\qquad\mbox{with}\;-(m_1+m_2)=k',\en
\eq x_{n_1}^{-k'}\zeta_{1}^{m+1}\qquad\mbox{with}\;m\in \mathbb{N},\en
\eq\label{n-} y_{n_1+1}^{-k'}\zeta_{2}^{m+1}\qquad\mbox{with}\;m\in \mathbb{N},\en
where $\zeta_1=x_{n_1-1}y_{n_1}-x_{n_1}y_{n_1-1}$ and $\zeta_2=x_{n_1+1}y_{n_1+2}-x_{n_1+2}y_{n_1+1}$.


Let $\mathfrak{g}_1$ be the subalgebra of $\mathfrak{o}(2n,\bb{C})$ spanned by the root vectors in the following set:$$I_1:=\{(\ref{ri-}),(\ref{tp-}),(\ref{inr-}),(\ref{int-}),(\ref{nti-}),(\ref{npt-})\}.$$

Let $\mathfrak{g}_2$ be the subalgebra of $\mathfrak{o}(2n,\bb{C})$ spanned by the root vectors in the following set:$$I_2:=\{(\ref{ti-}),(\ref{nri-}),(\ref{pnt-})\}.$$

So we get $U(\mathfrak{g}(\mathcal{P}_{+})_{-})=U(\mathfrak{g}_{2})U(\mathfrak{g}_{1}).$

We take a $\mathcal K$-singular vector $v_{\mathcal{K}}=x_{n_1}^{-k'}$, and set $M_{0}=U(\mathfrak{g}_1)x_{n_1}^{-k'}$.
  Then $M_{0}$  is finite-dimensional.

Thus
$${\mathcal{H}}_{\langle k' \rangle}=U(\mathfrak{g})v_{\mathcal{K}}=U(\mathfrak{g_{-}+\mathcal{P}_{+}})v_{\mathcal{K}}=U(\mathfrak{g}_{2})M_0.$$

Let $k$ be any positive integer. We want to compute $\mathrm{dim}(U_{k}(\mathfrak{g}_{2})M_{0})$, and then get the Gelfand-Kirillov dimension of $U(\mathfrak{g}_{2})M_{0}$.

The argument for this  case is similar to  case 1, and using Proposition \ref{n=n},
we find that  the Gelfand-Kirillov dimension of $U(\mathfrak{g}_{2})M_{0}$ is
\begin{align}\label{n1=n2}d=\left\{
                   \begin{array}{ll}
                     2n-3, & \hbox{if $n=2, 3$;}\\
                     2n-2, & \hbox{if $n=4$;}\\
                     2n-1, & \hbox{if $n\geq 5$.}
                   \end{array}
                 \right.
\end{align}

When $n_1=n_2=n-1,n$, the arguments are similar with the above, and the conclusion is the same with \ref{n1=n2}.
\section{Proof of the main theorem for $\mathfrak{o}(2n+1,\mbb{C})$}

We keep the same notations with the introduction. We know
$$\mathfrak{o}(2n+1,\mbb{F})=\mathfrak{o}(2n,\mbb{C})\oplus\bigoplus_{i=1}^n
[\mbb{C}(E_{0,i}-E_{n+i,0})+\mbb{C}(E_{0,n+i}-E_{i,0})]$$
and $\mathcal{B}'=\mbb{C}[x_0,x_1,...,x_n,y_1,...,y_n]$.

Luo and Xu \cite{Luo-Xu-lie} proved that
 for any
$ k'\in\mathbb{Z}$,
${\mathcal{H}'}_{\langle k' \rangle}$ is an irreducible
$\mathfrak{o}(2n+1,\bb{C})$-module. Moreover, the
homogeneous subspace ${\mathcal B}'=\bigoplus_{k\in \mathbb{Z}}\bigoplus_{i=0}^\infty(\eta')^i({\mathcal H}'_{\langle k'\rangle})$ is a
decomposition of irreducible submodules.  The module ${\mathcal{H}'}_{\langle k' \rangle}$ under the assumption is of highest-weight type only if $n_2=n$, in which case $x_{n_1}^{-k'}$ is a highest-weight vector
with weight
$-k'\lmd_{n_1-1}+(k'-1)\lmd_{n_1}+[(k'-1)\dlt_{n_1,n-1}-2k'\dlt_{n_1,n}]\lmd_n.$

\subsection{Case 1. $n_1<n_2$ \emph{and} $k'\in \mathbb{N}$.}

\hspace{1mm}

The representation of $\mathfrak{o}(2n+1,\mbb{C})$ on $\mathcal{B}'$ by the differential operators in (\ref{ri})-(\ref{npt}) and $\mathcal{K}_{+}$ with $|_{\mathcal{B}}$ is replaced by $|_{\mathcal{B}'}$ and also contains the following:
\begin{align} \label{0i}(E_{0,i}-E_{n+i,0})|_{\mathcal{B}'}&=-x_0x_i-y_i\ptl_{x_0} &\text{for} ~&i\in\overline{1,n_1},\\
\label{0s}(E_{0,s}-E_{n+s,0})|_{\mathcal{B}'}&=x_0\ptl_{x_s}-y_s\ptl_{x_0}    &\text{for} ~&s\in\overline{n_1+1,n_2},\\
 \label{0t}(E_{0,t}-E_{n+t,0})|_{\mathcal{B}'}&=x_0\ptl_{x_t}-\ptl_{x_0}\ptl_{y_t}   &\text{for} ~&t\in\overline{n_2+1,n},\\
\label{0ni}(E_{0,n+i}-E_{i,0})|_{\mathcal{B}'}&=x_0\ptl_{y_i}-\ptl_{x_0}\ptl_{x_i}   &\text{for} ~&i\in\overline{1,n_1},\\
\label{0ns}(E_{0,n+s}-E_{s,0})|_{\mathcal{B}'}&=x_0\ptl_{y_s}-{x_s}\ptl_{x_0}   &\text{for} ~&s\in\overline{n_1+1,n_2},\\
\label{0nt}(E_{0,n+t}-E_{t,0})|_{\mathcal{B}'}&=-x_0{y_t}-{x_t}\ptl_{x_0}    &\text{for} ~&t\in\overline{n_2+1,n}.
\end{align}

Now we want to compute the Gelfand-Kirillov dimensions of  the $\mathfrak{o}(2n+1,\bb{C})$-module ${\mathcal{H}'}_{\langle k'\rangle}$ and  ${\mathcal{H}'}_{\langle -k'\rangle}$ for this case.
%
From Luo-Xu \cite{Luo-Xu-lie} we know that ${\mathcal{H}'}_{\langle k'\rangle}$ is an irreducible $\mathfrak{o}(2n+1,\mbb{C})$-submodule generated by $x_{n_1+1}^{k'}$,  and ${\mathcal{H}'}_{\langle -k'\rangle}$ is an irreducible $\mathfrak{o}(2n+1,\mbb{C})$-submodule generated by $x_{n_1}^{k'}$. Then similar to the computation of
$\mathfrak{o}(2n+1,\bb{C})$, the Gelfand-Kirillov dimension of ${\mathcal{H}'}_{\langle k'\rangle}$ is
\begin{align*}d=\left\{
                   \begin{array}{ll}
                     2n-1, & \hbox{if $2=n_1<n_2=n$ or $1=n_1<n_2=n-1,n$;}\\
                     2n, & \hbox{if $3\leq n_1<n_2=n$ or $1<n_1<n_2=n-1$ or $1\leq n_1<n_2<n-1$.}
                   \end{array}
                 \right.
\end{align*}
${\mathcal{H}'}_{\langle -k'\rangle}$   has the same Gelfand-Kirillov dimension with ${\mathcal{H}'}_{\langle k'\rangle}$.

\subsection{Case 2. $n_1=n_2$ \emph{and} $k'\in \mathbb{N}$.}

\hspace{1mm}

The representation of $\mathfrak{o}(2n+1,\mbb{C})$ on $\mathcal{B}'$ by the differential operators in (\ref{ri})-(\ref{npt}) and $\mathcal{K}_{+}$ with $|_{\mathcal{B}}$ is replaced by $|_{\mathcal{B}'}$ and also contains the following:
\begin{align} \label{0i-}(E_{0,i}-E_{n+i,0})|_{\mathcal{B}'}&=-x_0x_i-y_i\ptl_{x_0} &\text{for} ~&i\in\overline{1,n_1},\\
 \label{0t-}(E_{0,t}-E_{n+t,0})|_{\mathcal{B}'}&=x_0\ptl_{x_t}-\ptl_{x_0}\ptl_{y_t}   &\text{for} ~&t\in\overline{n_2+1,n},\\
\label{0ni-}(E_{0,n+i}-E_{i,0})|_{\mathcal{B}'}&=x_0\ptl_{y_i}-\ptl_{x_0}\ptl_{x_i}   &\text{for} ~&i\in\overline{1,n_1},\\
\label{0nt-}(E_{0,n+t}-E_{t,0})|_{\mathcal{B}'}&=-x_0{y_t}-{x_t}\ptl_{x_0}    &\text{for} ~&t\in\overline{n_2+1,n}.
\end{align}

Now we want to compute the Gelfand-Kirillov dimensions of  the $\mathfrak{o}(2n+1,\bb{C})$-module ${\mathcal{H}'}_{\langle k'\rangle}$ and ${\mathcal{H}'}_{\langle -k'\rangle}$ for this case.
%
From Luo-Xu \cite{Luo-Xu-lie} we know that ${\mathcal{H}'}_{\langle -k'\rangle}$ is an irreducible $\mathfrak{o}(2n+1,\mbb{C})$-submodule generated by $x_{n_1}^{k'}$, and ${\mathcal{H}'}_{\langle k'\rangle}$ is an irreducible $\mathfrak{o}(2n+1,\mbb{C})$-submodule generated by $T_{1}(y_{n_1}^{k'-1})$ (here $T_1=\sum\limits_{i=0}^{\infty}\frac{(-2)^ix_0^{2i+1}{
\mathcal{D}}^i}{(2i+1)!}$ and $\mathcal{D}=-\sum\limits_{i=1}^{n_1}x_i\ptl_{y_i}+\sum\limits_{s=n_1+1}^{n_2}\ptl_{x_s}\ptl_{y_s}-\sum\limits_{t=n_2+1}^n
y_t\ptl_{x_t}$). Then similar to the computation of
$\mathfrak{o}(2n+1,\bb{C})$, the Gelfand-Kirillov dimension of ${\mathcal{H}'}_{\langle -k'\rangle}$ is
\begin{align*}d=\left\{
                   \begin{array}{ll}
                     2n-2, & \hbox{if $1=n_1=n_2<n=2,3$;}\\
                     2n-1, & \hbox{if $n_1=n_2=2$ when $ n=2,3$ or $n_1=n_2=1$ when $n=1,4$;}\\
                     2n,   & \hbox{if $n_1=n_2=n=3$ or $2\leq n_1=n_2\leq n=4$ or $1\leq n_1=n_2\leq n$ when $n\geq 5$.}
                   \end{array}
                 \right.
\end{align*}
${\mathcal{H}'}_{\langle k'\rangle}$   has the same Gelfand-Kirillov dimension with ${\mathcal{H}'}_{\langle -k'\rangle}$.

\section{Proof of the main theorem for $\mathfrak{sp}(2n,\mbb{C})$}
We keep the same notations with the introduction.
Recall the symplectic Lie
algebra\begin{eqnarray*}\hspace{1cm}\mathfrak{sp}(2n,\mbb{C})&=&
\sum_{i,j=1}^n\mbb{C}(E_{i,j}-E_{n+j,n+i})+\sum_{i=1}^n(\mbb{C}E_{i,n+i}+\mbb{C}E_{n+i,i})\\
& &+\sum_{1\leq i<j\leq n
}[\mbb{C}(E_{i,n+j}+E_{j,n+i})+\mbb{C}(E_{n+i,j}+E_{n+j,i})].\end{eqnarray*}
Again we take the Cartan subalgebra
$\mathfrak{h}=\sum_{i=1}^n\mbb{C}(E_{i,i}-E_{n+i,n+i})$ and the subspace
spanned by positive root vectors
$$\mathfrak{sp}(2n,\mbb{C})_+=\sum_{1\leq i<j\leq n}[\mbb{C}(E_{i,j}-E_{n+j,n+i})
+\mbb{C}(E_{i,n+j}+E_{j,n+i})]+\sum_{i=1}^n\mbb{C}E_{i,n+i}.$$

Correspondingly, we have $$\mathfrak{sp}(2n,\mbb{C})_-=\sum_{1\leq i<j\leq n}[\mbb{C}(E_{j,i}-E_{n+i,n+j})
+\mbb{C}(E_{n+i,j}+E_{n+j,i})]+\sum_{i=1}^n\mbb{C}E_{n+i,i}.$$

Fix $1\leq n_1\leq n_2\leq n$. We have the following two-parameter $\mathbb{Z}$-graded oscillator representation of $\mathfrak{sp}(2n,\mbb{C})$ on $\mathcal{B}=\mathbb{C}[x_1,...,x_n,y_1,...,y_n]$ determined by $$(E_{i,j}-E_{n+j,n+i})|_{\mathcal{B}}=E_{i,j}^{x}-E_{j,i}^{y}.$$
In particular we have
\begin{align}
 \label{si3}(E_{s,i}-E_{n+i,n+s})|_{\mathcal{B}}&=-{x_i x_{s}}-{y_i}\ptl_{y_{s}}  &\text{for} ~&i\in\overline{1,n_1},\;s\in\overline{n_1+1,n_2},
\\
\label{ti3}(E_{t,i}-E_{n+i,n+t})|_{\mathcal{B}}&=-{x_i x_{t}}+y_{i}y_{t}   &\text{for} ~&i\in\overline{1,n_1},\;t\in\overline{n_2+1,n},\\
  \label{ts3}(E_{t,s}-E_{n+s,n+t})|_{\mathcal{B}}&=x_{t}\ptl_{x_{s}}+y_{s} y_{t}  &\text{for} ~&s\in\overline{n_1+1,n_2},\;t\in\overline{n_{2}+1,n},\\
\label{nri3}(E_{n+r,i}+E_{n+i,r})|_{\mathcal{B}}&=-{x_i}{y_r}-{x_r}{y_i}  &\text{for~}& 1\leq i<r\leq
n_1,\\
\label{nsi3}(E_{n+s,i}+E_{n+i,s})|_{\mathcal{B}}&=-{x_i}{y_s}+{y_i}\ptl_{x_s}  &\text{for} ~&i\in\overline{1,n_1},\;s\in\overline{n_1+1,n_2},\\
\label{snt3}(E_{s,n+t}+E_{t,n+s})|_{\mathcal{B}}&=-x_s{y_t}+x_t\ptl_{y_s}        &\text{for} ~&s\in\overline{n_1+1,n_2},\;t\in\overline{n_{2}+1,n},\\
\label{pnt3}(E_{p,n+t}+E_{t,n+p})|_{\mathcal{B}}&=-x_p{y_t}-x_t{y_p}       &\text{for}~& n_2+1\leq p<t\leq n,\\
\label{ini}(E_{n+i,i})|_{\mathcal{B}}&=-x_i{y_i} &\text{for}~& i\in\overline{1,n_1},\\
\label{tnt}(E_{t,n+t})|_{\mathcal{B}}&=-x_t{y_t} &\text{for}~& t\in\overline{n_2+1,n}.
\end{align}
Then the above root elements form a subalgebra for $\mathfrak{sp}(2n,\mbb{C})$, denoted by $\mathfrak{g}_2$. The remaining root elements form another subalgebra for $\mathfrak{sp}(2n,\mbb{C})$, denoted by $\mathfrak{g}_1$.

Luo and Xu \cite{Luo-Xu-lie} proved that
 for any
$ k'\in\mathbb{Z}$, when $n_1<n_2$ or $k'\neq 0$,
${\mathcal{B}}_{\langle k' \rangle}$ is an irreducible weight
$\mathfrak{sp}(2n,\bb{C})$-module. Moreover, the module ${\mathcal{B}}_{\langle k' \rangle}$ under the assumption is of highest-weight type only if $n_2=n$,  in which case for
$m\in\mbb{N}$, $x_{n_1}^{-m}$ is a highest-weight vector of ${\mathcal
B}_{\langle -m\rangle}$ with weight $-m\lmd_{n_1-1}+(m-1)\lmd_{n_1}$,
$x_{n_1+1}^{m+1}$ is a highest-weight vector of ${\mathcal B}_{\langle
m+1\rangle}$ with weight
$-(m+2)\lmd_{n_1}+(m+1)\lmd_{n_1+1}+(m+1)\dlt_{n_1,n-1}\lmd_n$ if
$n_1<n_2=n$,  and $y_n^{m+1}$ is a highest-weight vector of ${\mathcal B}_{\langle
m+1\rangle}$ with weight $(m+1)\lmd_{n-1}-2(m+1)\lmd_n$ if $n_1=n_2=n$. When $n_1=n_2$, the subspace ${\mathcal B}_{\langle 0\rangle}$ is a direct sum
of two irreducible weight $\mathfrak{sp}(2n,\mbb{C})$-submodules. If $n_1=n_2=n$, they
are highest-weight modules with a highest-weight vector $1$ of
weight $-2\lmd_n$ and with a highest-weight vector
$x_{n-1}y_n-x_ny_{n-1}$ of weight $(1-\dlt_{n,2})\lmd_{n-2}-4\lmd_n$,
respectively.

We take $\mathcal {K}=\sum\limits_{i,j=1}^{n}\mathbb{C}(E_{i,j}-E_{n+j,n+i})$, and $\mathcal{K}_{+}=\sum_{1\leq i<j\leq n}\mbb{C}(E_{i,j}-E_{n+j,n+i})$.
A weight vector $v$ in $\mathcal{B}$ is called a   \emph{$\mathcal{K}$-singular vector} if $\mathcal{K}_{+}(v)=0$.


From the PBW theorem we have
 $${\mathcal{B}}_{\langle k' \rangle}=U(\mathfrak{g})v_{\mathcal{K}}=U(\mathfrak{g}_{2})U(\mathfrak{g}_{1})v_{\mathcal{K}}$$ for any fixed $\mathcal K$-singular vector $v_{\mathcal{K}}$. If we denote $M_0:=U(\mathfrak{g}_{1})v_{\mathcal{K}}$, then $M_0$ is finite-dimensional and ${\mathcal{B}}_{\langle k' \rangle}=U(\mathfrak{g}_{2})M_0$.

Similar to the $\mathfrak{o}(2n,\mbb{C})$ case, we can compute the Gelfand-Kirillov dimension of $\mathcal{B}_{\langle k' \rangle}$ in  a case-by-case way. Actually the Gelfand-Kirillov dimension  is equal to $$2n-1$$ for any irreducible $\mathfrak{sp}(2n,\bb{C})$-module  $\mathcal{B}_{\langle k' \rangle}$.

\end{document}